\documentclass[12pt]{article}
\usepackage[hmargin=0.9in,vmargin=1in]{geometry}
\usepackage[T1]{fontenc}
\usepackage[utf8]{inputenc}
\usepackage{lmodern,microtype,mathtools,amsthm,amssymb,enumitem,xcolor,hyperref,bookmark}

\hypersetup{
    colorlinks=true,
    linkcolor={red!50!black},
    citecolor={blue!50!black},
    bookmarksopen=true,
    bookmarksnumbered,
    bookmarksopenlevel=2,
    bookmarksdepth=3
}

\usepackage[capitalise]{cleveref}
\usepackage{thm-restate}

\setlist[itemize]{leftmargin=2em,labelsep=.6em,topsep=\medskipamount,noitemsep}
\setlist[enumerate]{leftmargin=2em,labelsep=.5em,topsep=\medskipamount,noitemsep}

\newtheorem{theorem}{Theorem}[section]
\newtheorem{lemma}[theorem]{Lemma}
\newtheorem{corollary}[theorem]{Corollary}

\newtheorem{problem}[theorem]{Problem}
\newtheorem{claim}[theorem]{Claim}

\makeatletter
\newdimen\@savedtopsep
\renewenvironment{proof}[1][\proofname]{\par\@savedtopsep\topsep
\pushQED{\qed}\normalfont\topsep6\p@\@plus6\p@\trivlist
\item[\hskip\labelsep\itshape #1\@addpunct{.}]\topsep\@savedtopsep
\ignorespaces}{\popQED\endtrivlist\@endpefalse}
\makeatother

\newenvironment{claimproof}{\begin{proof}[Proof of Claim]}{\end{proof}}

\crefname{problem}{Problem}{Problems}
\crefname{claim}{Claim}{Claims}

\newcommand{\bbN}{\mathbb{N}}
\newcommand{\cG}{\mathcal{G}}
\newcommand{\cT}{\mathcal{T}}
\newcommand{\C}{\mathsf{C}}
\newcommand{\K}{\mathsf{K}}
\newcommand{\M}{\mathsf{M}}
\newcommand{\N}{\mathsf{N}}
\newcommand{\R}{\mathsf{R}}
\newcommand{\NP}{\textsf{NP}}
\newcommand{\param}{\mathsf{p}}

\DeclareMathOperator{\yolov}{tree\textnormal{-}\mu}
\DeclareMathOperator{\treealpha}{tree\textnormal{-}\alpha}

\DeclarePairedDelimiter{\set}{\{}{\}}
\DeclarePairedDelimiter{\size}{\lvert}{\rvert}

\let\leq\leqslant
\let\geq\geqslant
\let\setminus\smallsetminus

\linepenalty 200

\title{Excluding a clique or a biclique in graphs of bounded induced matching treewidth}

\author{Tara Abrishami\thanks{University of Hamburg, Germany (\texttt{tara.abrishami@uni-hamburg.de}).
Supported by the National Science Foundation Award Number DMS-2303251 and the Alexander von Humboldt Foundation.}
\and Marcin Briański\thanks{Department of Theoretical Computer Science, Faculty of Mathematics and Computer Science, Jagiellonian University, Kraków, Poland (\texttt{marcin.brianski@doctoral.uj.edu.pl}, \texttt{bartosz.walczak@uj.edu.pl}).
Partially supported by Polish National Science Centre grant number 2019/34/E/ST6/00443.}
\and Jadwiga Czyżewska\thanks{University of Warsaw, Poland (\texttt{j.czyzewska@mimuw.edu.pl}).
Supported by Polish National Science Centre SONATA BIS-12 grant number 2022/46/E/ST6/00143.}
\and Rose McCarty\thanks{School of Mathematics and School of Computer Science, Georgia Institute of Technology, USA (\texttt{rmccarty3@gatech.edu}).
Supported by the National Science Foundation under Grant Number DMS-2202961.}
\and Martin Milanič\thanks{FAMNIT and IAM, University of Primorska, Slovenia (\texttt{martin.milanic@upr.si}).
Supported in part by the Slovenian Research and Innovation Agency (I0-0035, research program P1-0285 and research projects J1-3001, J1-3002, J1-3003, J1-4008, J1-4084, and N1-0370), and by the research program CogniCom (0013103) at the University of Primorska.}
\and Paweł Rzążewski\thanks{Warsaw University of Technology \& University of Warsaw, Poland (\texttt{pawel.rzazewski@pw.edu.pl}).
This work is a part of the project BOBR that has received funding from the European Research Council (ERC) under the European Union's Horizon 2020 research and innovation programme (grant agreement No.~948057).}
\and Bartosz Walczak\footnotemark[2]}

\date{}

\begin{document}

\maketitle

\begin{abstract}
For a tree decomposition $\cT$ of a graph $G$, let $\mu(\mathcal{T})$ denote the maximum size of an induced matching in $G$ with the property that some bag of $\mathcal{T}$ contains at least one endpoint of every edge of the matching.
The \emph{induced matching treewidth} of a graph $G$ is the minimum value of $\mu(\mathcal{T})$ over all tree decompositions $\mathcal{T}$ of~$G$.
Classes of graphs with bounded induced matching treewidth admit polynomial-time algorithms for a number of problems, including \textsc{Independent Set}, \textsc{$k$-Coloring}, \textsc{Odd Cycle Transversal}, and \textsc{Feedback Vertex Set}.
In this paper, we focus on combinatorial properties of such classes.

First, we show that graphs with bounded induced matching treewidth that exclude a fixed biclique as an induced subgraph have bounded \emph{tree-independence number}, which is another well-studied parameter defined in terms of tree decompositions.
This sufficient condition about excluding a biclique is also necessary, as bicliques have unbounded tree-independence number.
Second, we show that graphs with bounded induced matching treewidth that exclude a fixed clique have bounded chromatic number, that is, classes of graphs with bounded induced matching treewidth are $\chi$-bounded.
The two results confirm two conjectures due to Lima et~al.~[ESA 2024].
\end{abstract}

\section{Introduction}
The notions of tree decompositions and treewidth are among the most successful concepts in graph theory.
They were discovered by several groups of authors with various motivations~\cite{ACP87,BB72,Halin76,RS84}.
They proved to be essential in the study of graph minors~\cite{RS84,RS99,RS03} and in the design of efficient algorithms on graph classes~\cite[Chapters 7 and~11]{CFKLMPPS15}.

A tree decomposition describes the structure of a graph $G$ in terms of a tree whose nodes correspond to subsets of the vertices of $G$ called \emph{bags} (see \cref{sec:prelim} for a precise definition) in such a way that imposing local conditions on the bags leads to strong and interesting properties of the entire graph $G$.
For instance, bounding the size of every bag leads to the definition of treewidth.
As another example, chordal graphs are the graphs that admit a tree decomposition where all bags are cliques.
Such bags may be of arbitrary size, but their structure is heavily restricted.

A common generalization of both the above-mentioned restrictions is the concept of \emph{tree-independence number}, introduced independently by Yolov~\cite{Yolov18} and by Dallard, Milanič, and Štorgel~\cite{DMS24a}; we follow the notation of the latter paper.
The \emph{independence number} of a tree decomposition $\cT$, denoted by $\alpha(\cT)$, is the maximum size of an independent set contained in a single bag of $\cT$.
The \emph{tree-independence number} of a graph $G$, denoted by $\treealpha(G)$, is the minimum independence number of a tree decomposition of $G$.
Using this terminology, chordal graphs can be described as the graphs with tree-independence number equal to $1$.

Classes of graphs with bounded tree-independence number have received some attention, both from the structural~\cite{AACHSV24,CHLS24,DMS24b} and from the algorithmic~\cite{DFGKM24,DMS24a,LMMORS24,LMMORS-ESA24,Yolov18} points of view.
However, as pointed out already by Yolov~\cite{Yolov18}, tree-independence number is in some ways unsatisfactory.
For example, such a simple graph as $K_{t,t}$ (the complete bipartite graph with $t$ vertices on each side) has large tree-independence number.
Specifically, $\treealpha(K_{t,t})=t$, because every tree decomposition of $K_{t,t}$ has a bag that contains one of the two parts of the bipartition.

As a remedy, Yolov~\cite{Yolov18} defined a relaxed variant of $\treealpha$; we use the notation introduced later in~\cite{LMMORS24,LMMORS-ESA24}.
For a tree decomposition $\cT$ of a graph $G$, we say that an induced matching (i.e., a set of edges that are pairwise disjoint and non-adjacent in $G$) $M$ in $G$ \emph{touches} a bag $\beta$ of $\cT$ if every edge in $M$ has at least one end in $\beta$, and we let $\mu(\cT)$ denote the maximum size of an induced matching that touches some bag of $\cT$.
The \emph{induced matching treewidth} of $G$, denoted by $\yolov(G)$, is the minimum value of $\mu(\cT)$ over all tree decompositions $\cT$ of $G$.
While this definition may seem somewhat arbitrary, Yolov~\cite{Yolov18} showed that some classical \NP-hard problems, including \textsc{Independent Set} and $k$-\textsc{Coloring}, can be solved in polynomial time in classes of graphs with bounded $\yolov$.
Further algorithmic applications and structural results were later provided by Lima, Milanič, Muršič, Okrasa, Rzążewski, and Štorgel~\cite{LMMORS24,LMMORS-ESA24}.

There is a straightforward relationship between tree-independence number and induced matching treewidth: $\yolov(G)\leq\treealpha(G)$ for every graph $G$.
Indeed, if $\cT$ is a tree decomposition of $G$ and $M$ is an induced matching that touches a bag $\beta$ of $\cT$, then $\beta$ includes an independent set of size $\size{M}$ containing one end of every edge from $M$, witnessing that $\mu(\cT)\leq\alpha(\cT)$.
On the other hand, tree-independence number cannot be bounded from above by any function of induced matching treewidth.
For example, $\yolov(K_{t,t})=1$ (as $K_{t,t}$ contains no two-edge induced matching) while $\treealpha(K_{t,t})=t$, as we have observed above.

Lima et~al.~\cite[Conjecture~8.2]{LMMORS24} (see also~\cite[Conjecture~17]{LMMORS-ESA24}) 
conjectured that the presence of large complete bipartite graphs (also called \emph{bicliques}) is the \emph{only} reason why graphs of small $\yolov$ might have large $\treealpha$.
More formally, they suggested that graphs of bounded induced matching treewidth that exclude some fixed biclique as an induced subgraph actually have bounded tree-independence number.
As our first result, we confirm this conjecture.

\begin{restatable}{theorem}{thmbiclique}
\label{thm:biclique}
For any positive integers\/ $\mu$ and\/ $t$, there is an integer\/ $\K(\mu,t)$ such that the following holds.
Every\/ $K_{t,t}$-free graph\/ $G$ with\/ $\yolov(G)\leq\mu$ satisfies\/ $\treealpha(G)<\K(\mu,t)$.
\end{restatable}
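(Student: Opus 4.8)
The plan is to take a tree decomposition $\cT$ of $G$ witnessing $\yolov(G)\leq\mu$ and refine it, bag by bag, into one with bounded independence number. Fix a bag $\beta$ of $\cT$ and let $I\subseteq\beta$ be an independent set. The key point is that $I$ cannot be too large: if it were, then because $\mu(\cT)\le\mu$ we could not find a large induced matching with one endpoint in $\beta$ and the other outside — but this does \emph{not} immediately bound $\size I$, since vertices of $I$ might have all their neighbours inside $\beta$ as well. So the first step is to separate the analysis into the part of $G$ ``seen locally'' at $\beta$ versus the rest. More precisely, I would argue that for each $v\in I$, either $v$ has a neighbour in some other bag (``boundary vertices''), or $N(v)\subseteq\beta$ (``interior vertices''). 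The interior vertices of $I$ form an independent set all of whose neighbourhoods lie in $\beta$; one should be able to pull them out of $\beta$ into their own small bags attached appropriately, handling them recursively or observing they contribute a $K_{t,t}$-type obstruction if there are too many with large common neighbourhood.

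Next I would bound the boundary vertices of $I$. This is where both hypotheses get used together. Suppose $J\subseteq I$ consists of boundary vertices, each with a neighbour outside $\beta$. For each $v\in J$ pick an edge $e_v=vw_v$ with $w_v\notin\beta$. These edges are pairwise vertex-disjoint on the $J$-side (since $J$ is an independent set in particular the $v$'s are distinct), but they need not form an \emph{induced} matching — two such edges $e_u$, $e_v$ may be joined by an edge (e.g. $uw_v\in E(G)$, or $w_uw_v\in E(G)$). The standard move is to pass to a large subfamily that \emph{is} an induced matching. One builds an auxiliary graph $H$ on $J$ where $u\sim_H v$ if $e_u,e_v$ fail to be induced-matching-compatible, and seeks a large independent set in $H$. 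If $H$ has small chromatic number (or is otherwise sparse), a large color class gives a large induced matching touching $\beta$, contradicting $\yolov(G)\le\mu$; hence $\size J$ is bounded. To control $H$, I would use that $G$ is $K_{t,t}$-free: a high-degree vertex in $H$ would correspond to one edge $e_u=uw_u$ adjacent to many of the $e_v$'s, which forces either $w_u$ or $u$ (or the $w_v$'s, or the $v$'s) to be adjacent to many vertices on one side — and a Ramsey / sunflower argument then extracts a $K_{t,t}$. This is the heart of the argument.

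With both the interior and boundary parts of every independent set in every bag bounded, we conclude $\alpha(\cT')\le\K(\mu,t)$ for the modified decomposition $\cT'$, provided the modifications (pulling out interior vertices, possibly subdividing bags or adding new bags for the extracted pieces) preserve the tree-decomposition axioms. I would verify that each local modification is valid: when a vertex $v$ with $N(v)\subseteq\beta$ is removed from all bags and placed in a fresh bag $\{v\}\cup N(v)$ adjacent to the node of $\beta$, the connectivity and edge-coverage conditions still hold. One must be slightly careful to do these modifications simultaneously or in a controlled order so they do not interfere; alternatively, phrase the whole thing as a single global construction.

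The main obstacle I expect is the induced-matching extraction step: turning ``many disjoint edges leaving $\beta$'' into ``many \emph{pairwise non-adjacent} edges leaving $\beta$'' while only losing a bounded factor. The naive auxiliary-graph approach needs the auxiliary graph to have bounded degeneracy or bounded chromatic number, and proving that from $K_{t,t}$-freeness alone requires a careful counting/Ramsey argument — possibly iterated, since there are several ways two edges can be adjacent ($u$--$w_v$, $v$--$w_u$, $w_u$--$w_v$, and, though impossible here, $u$--$v$), each of which must separately be shown to be rare. A clean way to organize this is: first sparsify so that the $w_v$'s are distinct and the $w$-side is also ``independent enough'' (using $K_{t,t}$-freeness to forbid a single vertex dominating too many edges), then apply a bounded-degeneracy argument on what remains. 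Getting the quantitative bookkeeping right here — and ensuring the final $\K(\mu,t)$ is a genuine function of $\mu$ and $t$ only, with no hidden dependence on $|V(G)|$ — will be the delicate part.
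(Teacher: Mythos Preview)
The boundary step has a genuine gap. You claim that if $J\subseteq\beta$ is independent and each $v\in J$ has a neighbour $w_v\notin\beta$, then $\size{J}$ is bounded in terms of $\mu$ and $t$, by extracting a large induced matching from the edges $\{vw_v:v\in J\}$ via your auxiliary graph $H$. This fails already for the star $K_{1,n}$: take a two-node tree decomposition with bags $\beta=\{v_1,\ldots,v_n\}$ and $\{c,v_1,\ldots,v_n\}$. Then $\mu(\cT)=1$, the graph is $K_{2,2}$-free, every $v_i$ is a boundary vertex of $\beta$ with the \emph{only} possible choice $w_{v_i}=c$, and your auxiliary graph $H$ is the complete graph on $n$ vertices. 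The heuristic ``$K_{t,t}$-freeness forbids a single vertex from dominating too many edges'' is simply false: one vertex adjacent to many gives a star, not a biclique. You might hope that the interior extraction at the other bag rescues this particular example, but in general a vertex of $I$ need not have its whole neighbourhood contained in any single bag, and even when it does, the fresh bag $N[v]$ you create can have independence number as large as $\alpha(\beta)$ itself, so your ``handling them recursively'' is not well-founded.

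The paper sidesteps both issues by working with a \emph{global} maximum independent set $S$ of $G$ rather than a bag-local split. For any independent set $I\subseteq\beta(x)\setminus S$, maximality of $S$ together with K\H{o}nig's theorem gives a matching of size $\size{I}$ in the bipartite graph $G[S\cup I]$; this is exactly the device that manufactures \emph{distinct} private neighbours and lets the bipartite Ramsey extraction go through, bounding $\alpha(\beta(x)\setminus S)$. The vertices of $\beta(x)\cap S$ are then split not by interior/boundary but by whether $\alpha(N(v))$ is small (``light'') or large (``heavy''): light vertices are removed from all bags, their neighbourhoods are added in their place, and a leaf bag $N[s]$ is attached for each; heavy vertices are shown to be few in every bag by a second Ramsey argument. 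The idea you are missing is this K\H{o}nig/maximality step: you need a mechanism that guarantees a large \emph{matching} (with distinct endpoints on both sides) between your independent set and the rest of the graph, and the local condition ``has a neighbour outside $\beta$'' does not provide one.
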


This result, combined with the discussion above, immediately yields the following classification of tree-independence number and induced matching treewidth in hereditary classes of graphs, i.e., classes that are closed under vertex deletion.
(Here, by functionally equivalent, we mean that one parameter can be bounded by a function of the other, and vice versa.)

\begin{corollary}
\label{cor:funcEquiv}
The parameters\/ $\treealpha$ and\/ $\yolov$ are functionally equivalent in a hereditary graph class\/ $\cG$ if and only if\/ $\cG$ does not contain all complete bipartite graphs.
\end{corollary}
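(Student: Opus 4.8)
The plan is to prove the two directions of the equivalence separately, with essentially all of the content concentrated in \cref{thm:biclique}. For the backward direction, suppose that the hereditary class $\cG$ does not contain all complete bipartite graphs, so there is a positive integer $t$ with $K_{t,t}\notin\cG$. Since $\cG$ is hereditary and $K_{t,t}$ would occur as an induced subgraph of any larger biclique, no graph in $\cG$ can have $K_{t,t}$ as an induced subgraph; in other words, every member of $\cG$ is $K_{t,t}$-free. Now fix an arbitrary $G\in\cG$. On one hand $\yolov(G)\leq\treealpha(G)$, as observed in the introduction, so if $\treealpha$ is bounded on $\cG$ then so is $\yolov$. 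On the other hand, applying \cref{thm:biclique} with $\mu\coloneqq\yolov(G)$ yields $\treealpha(G)<\K(\yolov(G),t)$, so if $\yolov$ is bounded on $\cG$ then so is $\treealpha$. Hence on $\cG$ each of the two parameters is bounded by a function of the other, i.e., they are functionally equivalent.

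For the forward direction I would argue by contraposition. If $\cG$ contains all complete bipartite graphs, then in particular $K_{t,t}\in\cG$ for every positive integer $t$. As recalled in the introduction, $\yolov(K_{t,t})=1$ (there is no two-edge induced matching) while $\treealpha(K_{t,t})=t$ (every tree decomposition has a bag containing one side of the bipartition). Thus $\yolov$ is bounded by $1$ on the subfamily of $\cG$ consisting of all bicliques $K_{t,t}$, $t\geq 1$, whereas $\treealpha$ is unbounded on it, so $\treealpha$ cannot be bounded by any function of $\yolov$ on $\cG$; the two parameters are therefore not functionally equivalent.

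I do not expect any genuine obstacle here: the combinatorial heart of the statement is \cref{thm:biclique}, and the remaining ingredients are the elementary inequality $\yolov\leq\treealpha$, the standard values of both parameters on bicliques, and a one-line use of heredity to pass from ``$K_{t,t}\notin\cG$'' to ``$\cG$ is $K_{t,t}$-free''. If anything deserves a moment's care, it is only this last translation together with keeping straight which direction of ``functionally equivalent'' each ingredient supplies.
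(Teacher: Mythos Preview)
Your proposal is correct and matches the paper's intended argument: the paper does not give an explicit proof of this corollary but states it as an immediate consequence of \cref{thm:biclique} together with the discussion preceding it (the inequality $\yolov\leq\treealpha$ and the values $\yolov(K_{t,t})=1$, $\treealpha(K_{t,t})=t$), which is exactly what you spell out. The only cosmetic point is that your remark about $K_{t,t}$ occurring in ``any larger biclique'' is unnecessary for the step you are justifying there---heredity alone gives that $K_{t,t}\notin\cG$ implies every $G\in\cG$ is $K_{t,t}$-free.
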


\Cref{thm:biclique} strengthens the following known result on graphs with $K_{t,t}$ excluded as a~subgraph rather than an induced subgraph: for any $\mu$ and $t$, there exists $\K'(\mu,t)$ such that every graph $G$ with $\yolov(G)\leq\mu$ and with no $K_{t,t}$ subgraph has treewidth less than $\K'(\mu,t)$ \cite[Theorem~7.5]{LMMORS24}.
This follows from \cref{thm:biclique}, because, by Ramsey's theorem, treewidth is bounded by a function of $\treealpha$ and the maximum size of a clique, and the latter is less than $2t$ in considered graphs.
Graphs with bounded $\yolov$ and with no $K_{t,t}$ subgraph are known to have bounded minimum degree.
This can be seen as a corollary to the above-mentioned result from~\cite{LMMORS24} (and the fact that minimum degree is bounded by treewidth), but it also follows from a well-known theorem of Kühn and Osthus~\cite{KO04} that graphs with no induced subdivisions of a fixed graph and with no $K_{t,t}$ subgraph have bounded minimum degree; this is because every proper subdivision of $K_{s,s}$ (with all edges subdivided at least once) has induced matching treewidth at least $s$.

In hereditary graph classes, bounded minimum degree implies bounded chromatic number.
If we exclude a large clique rather than a large $K_{t,t}$, we cannot hope for a property as strong as bounded minimum degree (as it does not hold for bicliques), but it turns out that we can still bound the chromatic number.
A graph class $\cG$ is \emph{$\chi$-bounded} if there is a function $f\colon\bbN\to\bbN$ such that every graph $G\in\cG$ satisfies $\chi(G)\leq f(\omega(G))$.
Here, $\chi(G)$ is the chromatic number of $G$, and $\omega(G)$ is the maximum size of a clique in $G$.
The study of $\chi$-boundedness is a very fruitful topic in graph theory; see~\cite{SS20} for a survey.
Since chromatic number is bounded by treewidth (plus one), which is in turn bounded by a function of $\treealpha$ and $\omega$, classes of graphs with bounded tree-independence number are $\chi$-bounded.

Another conjecture of Lima et~al.~\cite[Conjecture~8.3]{LMMORS24} (see also~\cite[Conjecture~18]{LMMORS-ESA24}) asserts that classes of graphs with bounded $\yolov$ are also $\chi$-bounded.
Our second result confirms this conjecture.

\begin{restatable}{theorem}{thmchibdd}
\label{thm:chibdd}
Every class of graphs with bounded\/ $\yolov$ is\/ $\chi$-bounded.
\end{restatable}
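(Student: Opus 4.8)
The plan is to bound $\chi(G)$ in terms of $\mu$ and $\omega := \omega(G)$ by splitting on whether $G$ contains a large biclique as an induced subgraph. Fix a threshold $t = t(\mu,\omega)$, to be determined. If $G$ is $K_{t,t}$-free, then \cref{thm:biclique} gives $\treealpha(G) < \K(\mu,t)$; in a tree decomposition witnessing this, every bag $\beta$ induces a graph with no independent set of size $\K(\mu,t)$ and no clique of size $\omega+1$, so by Ramsey's theorem $\size{\beta}$ is bounded in terms of $\mu$ and $\omega$. Hence $\tw(G)$ is bounded and $\chi(G)\le\tw(G)+1$ is bounded. So from now on we may assume that $G$ contains a large induced biclique; the remaining, main task is to show that such bicliques can be ``removed'' at a bounded cost.

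For that, I would induct on $\omega$ (the base case $\omega\le 1$ being trivial) and process large induced bicliques one at a time. Suppose $G$ contains an induced biclique with parts $A$ and $B$, and fix a tree decomposition $\cT$ of $G$ with $\mu(\cT)\le\mu$. A standard argument about cross-intersecting subtrees of the decomposition tree shows that some bag $\beta$ contains one of the parts, say $A$. Now I exploit $\mu(\cT)\le\mu$: if many vertices $a\in A$ had a neighbour $x_a\notin A\cup B$ adjacent to no other vertex of $A$, then, using $\omega(G)\le\omega$ and Ramsey's theorem applied to the graph induced by the $x_a$'s, some $\mu+1$ of the edges $ax_a$ would be pairwise non-adjacent, yielding an induced matching of size $\mu+1$ touching $\beta$, a contradiction. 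Hence all but boundedly many $a\in A$ are ``tame'': every neighbour of $a$ outside $A\cup B$ is adjacent to some other vertex of $A$. (One could alternatively arrive at this configuration by passing to a $\chi$-critical subgraph, which has large minimum degree, and invoking the K\"uhn--Osthus theorem together with the fact that graphs of bounded $\yolov$ exclude induced subdivisions of large bicliques, thereby forcing a large biclique subgraph and hence, by Ramsey's theorem, a large induced one.)

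Let $A'\subseteq A$ be this large tame set. It is an independent set contained in the bag $\beta$, complete to $B$, and with all external neighbourhoods ``shared'' with the rest of $A$. The idea is then to delete $A'$: since $A'$ is independent, one extra colour extends any proper colouring of $G-A'$ to $A'$, so $\chi(G)\le\chi(G-A')+1$, while $G-A'$ still satisfies $\yolov(G-A')\le\mu$ and $\omega(G-A')\le\omega$. Iterating this, one eventually reaches a graph with no large induced biclique, which is covered by the first paragraph; then $\chi(G)$ is at most the bound from there plus the number of deletions performed.

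The main obstacle is exactly to bound the number of deletions --- equivalently, the total chromatic-number cost of clearing out all large induced bicliques --- by a function of $\mu$ and $\omega$ alone; this is the real content of the theorem. It should require a more careful analysis of how a large induced biclique attaches to $G$ through $\cT$ (the tameness of $A'$, where the part $B$ sits, and the interaction between distinct large bicliques), arranged so that one carefully chosen deletion decreases a bounded potential, or so that only boundedly many deletions can ever occur. The inductive hypothesis on $\omega$ --- which makes the neighbourhood of every vertex $c$-colourable for a constant $c=c(\mu,\omega-1)$, hence contain a large independent set --- is the natural tool for tying the structure around a biclique to the structure around individual high-degree vertices in this analysis.
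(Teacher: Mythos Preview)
Your reduction for the $K_{t,t}$-free case via \cref{thm:biclique} and Ramsey is correct, but this is not how the paper proceeds, and the remaining case is where your proposal has a genuine gap that you yourself flag. The iteration ``find a large induced biclique, delete a large independent subset $A'$ of one side, repeat'' costs one colour per step yet decreases no quantity bounded in terms of $\mu$ and $\omega$ alone. Passing to a $\chi$-critical subgraph at each step does force each deletion to drop $\chi$ by exactly one, but $\chi$ is precisely what you are trying to bound, so this gives no termination. The tameness you derive for $A'$ (no private neighbours outside $A\cup B$) is correct but imposes no global constraint preventing arbitrarily many rounds; nothing in the sketch supplies the ``bounded potential'' you say is needed, and I do not see a natural candidate for one along these lines.

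The paper's proof does not use \cref{thm:biclique} at all. It inducts on $\mu+\omega$, takes a BFS-type layering of $G$ starting from an edge, and bounds $\chi(L_i)$ for each layer. For $i\ge 2$ the layer is split into an ``inner'' part $A$ (vertices whose subtree in a fixed decomposition $\cT$ meets the subtree spanned by layers $L_0,\ldots,L_{i-2}$) and an ``outer'' part $B=L_i\setminus A$. The key mechanism, with no analogue in your scheme, is that $\yolov(G[A])\le\mu-1$: any induced matching touching a bag of the restricted decomposition can be extended by an edge lying entirely in the earlier layers, which is anticomplete to $L_i$. For $B$, each component $C$ is dominated from $L_{i-1}$ by a minimal set $D$; one shows $D$ lies in a single bag (since the subtree of $C$ is disjoint from that of the earlier layers and each $d\in D$ has neighbours on both sides), and then a three-colour Ramsey argument on $D$ and the private neighbours $v_d\in C$ bounds $\size{D}$, so $\chi(C)\le\size{D}\cdot f(\mu,\omega-1)$.
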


The rest of the paper is organized as follows.
In \cref{sec:prelim}, we formally define tree decompositions and the parameters considered, and we introduce the notation used further on.
We prove \cref{thm:biclique} in \cref{sec:biclique} and \cref{thm:chibdd} in \cref{sec:chibdd}.
Finally, in \cref{sec:open}, we suggest open problems for future work.

\section{Preliminaries}\label{sec:prelim}
We let $\bbN$ denote the set of positive integers.
For $n\in\bbN$, we let $[n]$ denote the set $\set{1,\ldots,n}$.

\paragraph{Graph theory}
Let $G$ be a graph.
We let $V(G)$ denote the set of vertices and $E(G)$ denote the set of edges of $G$.
For a vertex $v$ of $G$, we let $N_G(v)$ denote the set of neighbors of $v$, and we let $N_G[v]$ denote $N_G(v)\cup\set{v}$.
Similarly, for a set $X\subseteq V(G)$, we define $N_G[X]=\bigcup_{v\in X}N_G[v]$ and $N_G(X)=N_G[X]\setminus X$.
If $G$ is clear from the context, we drop the subscript in the notation above and simply write $N(\cdot)$ and $N[\cdot]$.

A \emph{clique} in $G$ is a set of vertices of $G$ that are pairwise adjacent, and an \emph{independent set} in $G$ is a set of vertices of $G$ that are pairwise nonadjacent.
We let $\omega(G)$ denote the maximum size of a clique and $\alpha(G)$ denote the maximum size of an independent set in $G$.
A \emph{proper coloring} of $G$ is an assignment of colors to the vertices of $G$ such that no two adjacent vertices receive the same color.
We let $\chi(G)$ denote the \emph{chromatic number} of $G$, i.e., the minimum number of colors in a proper coloring of $G$.
A \emph{matching} in $G$ is a set of pairwise disjoint edges of $G$.
Kőnig's theorem asserts that in every bipartite graph $G$, the maximum size of a matching is equal to the minimum size of a vertex cover, where the latter is equal to $\size{V(G)}-\alpha(G)$, as the vertex covers are exactly the complements of the independent sets.

\paragraph{Induced subgraphs}
For a set $X \subseteq V(G)$, we let $G[X]$ denote the subgraph of $G$ \emph{induced by} $X$, i.e., the graph with vertex set $X$ and edge set $\set{uv\in E(G)\mid u,v\in X}$.
We say that $H$ is an \emph{induced subgraph} of $G$ if a copy of $H$ is induced in $G$ by some subset of $V(G)$.
An \emph{induced matching} in $G$ is a matching that occurs in $G$ as the edge set of an induced subgraph, which means that the vertices of distinct edges of the matching are not only distinct but also nonadjacent.
For a positive integer $t$, we let $K_{t,t}$ denote the complete bipartite graph with $t$ vertices on each side of the bipartition, and we say that $G$ is \emph{$K_{t,t}$-free} if $K_{t,t}$ is not an induced subgraph of $G$.

If the host graph $G$ is clear from the context, we sometimes identify vertex sets with the subgraphs induced by them.
For example, we say that a subset $X$ of $V(G)$ is connected to indicate that $G[X]$ is connected, and we write $\alpha(X)$, $\chi(X)$, etc.\ for $\alpha(G[X])$, $\chi(G[X])$, etc., respectively.

\paragraph{Tree decompositions}
A \emph{tree decomposition} of a graph $G$ is a pair $\cT=(T,\beta)$ where $T$ is a~tree and $\beta$ is a function that maps the vertices of $T$ to subsets of $V(G)$ so that the following three conditions are satisfied.
\begin{enumerate}
\item For every vertex $v$ of $G$, there is $x\in V(T)$ such that $v\in\beta(x)$.
\item For every edge $uv$ of $G$, there is $x\in V(T)$ such that $u,v\in\beta(x)$.
\item For every vertex $v$ of $G$, the set $\set{x\in V(T)\mid v\in\beta(x)}$ induces a connected subgraph of $T$ (i.e., a subtree).
We let $T_v$ denote this subtree in the context of a fixed tree decomposition.
\end{enumerate}
To avoid confusion between vertices of $G$ and of $T$, we refer to the latter as \emph{nodes}.
For each node $x$, its corresponding set $\beta(x)$ is called a \emph{bag}.

Let $G$ be a graph, and let $\cT=(T,\beta)$ be a tree decomposition of $G$.
The \emph{independence number} of $\cT$, denoted by $\alpha(\cT)$, is defined as $\max_{x\in V(T)}\alpha(\beta(x))$.
An induced matching $M$ in $G$ is said to \emph{touch} a set $X\subseteq V(G)$ if every edge in $M$ has an endpoint in $X$.
The \emph{induced matching treewidth} of $\cT$, denoted by $\mu(\cT)$, is defined as the maximum over $x\in V(T)$ of the maximum size of an induced matching that touches $\beta(x)$.
The minimum values of $\alpha(\cT)$ and $\mu(\cT)$ over all tree decompositions $\cT$ of $G$ are called the \emph{tree-independence number} and the \emph{induced matching treewidth} of $G$ and are denoted by $\treealpha(G)$ and $\yolov(G)$, respectively.
It is straightforward to verify that $\mu(\cT)\leq\alpha(\cT)$ for every tree decomposition $\cT$ of $G$, and thus $\yolov(G)\leq\treealpha(G)$.

\paragraph{Ramsey numbers}
For positive integers $t_1,\ldots,t_k$, we let $\R(t_1,\ldots,t_k)$ denote the \emph{Ramsey number}, i.e., the minimum integer $n$ such that for every $k$-coloring of the edges of the complete graph on $n$ vertices, we can find a $t_i$-vertex complete subgraph all of whose edges are colored $i$, for some $i\in[k]$.
The existence of such a number is a consequence of Ramsey's theorem.

\section{Proof of \texorpdfstring{\cref{thm:biclique}}{Theorem \ref{thm:biclique}}}\label{sec:biclique}
We start with two auxiliary Ramsey-type results.
The first one is well known (see, e.g., \cite{DDL13}), but we include a short proof for completeness.

\begin{lemma}[folklore]\label{lem:extract-induced-matching}
For any positive integers\/ $s$ and\/ $t$, there is an integer\/ $\M(s,t)$ for which the following holds.
Every bipartite graph that contains a matching of size\/ $\M(s,t)$ contains either an induced\/ $K_{t,t}$ or an induced matching of size\/ $s+1$.
\end{lemma}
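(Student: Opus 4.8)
The plan is to analyze, via Ramsey's theorem, how the edges of the given matching interact with one another. First I would fix a proper $2$-coloring of the bipartite graph in question, call it $G$, and write the given matching of size $m$ as $M=\set{a_1b_1,\dots,a_mb_m}$, where for each $i\in[m]$ the vertex $a_i$ has the first color and $b_i$ has the second. Then $\set{a_1,\dots,a_m}$ and $\set{b_1,\dots,b_m}$ are independent sets in $G$. For each pair of indices $i<j$ from $[m]$, I would record which of the two ``cross'' edges $a_ib_j$ and $a_jb_i$ belongs to $E(G)$; this assigns to the pair $\set{i,j}$ one of four colors, giving a $4$-coloring of the pairs of elements of $[m]$.

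Next, set $k=\max\set{s+1,2t}$ and define $\M(s,t)=\R(k,k,k,k)$. If $m\geq\M(s,t)$, then by Ramsey's theorem there is a set $S=\set{i_1<\dots<i_k}\subseteq[m]$ of size $k$ on which the $4$-coloring is constant, and I claim each of the four possibilities yields the desired outcome. If the common color corresponds to ``neither cross edge present'', then $\set{a_{i_\ell}b_{i_\ell}\mid\ell\in[k]}$ is an induced matching of size $k\geq s+1$. If it corresponds to ``both cross edges present'', then (together with the diagonal matching edges) every $a_{i_\ell}$ is adjacent to every $b_{i_{\ell'}}$, so $G[\set{a_{i_\ell},b_{i_\ell}\mid\ell\in[k]}]$ is a complete bipartite graph $K_{k,k}$, which contains an induced $K_{t,t}$ since $k\geq 2t\geq t$. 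In the two remaining cases, up to symmetry we may assume $a_{i_p}b_{i_q}\in E(G)$ whenever $p\leq q$ (combining the diagonal edges with the cross edges prescribed by the color), so that $G[\set{a_{i_\ell},b_{i_\ell}\mid\ell\in[k]}]$ is a half-graph; taking $\set{a_{i_1},\dots,a_{i_t}}$ on one side and $\set{b_{i_{t+1}},\dots,b_{i_{2t}}}$ on the other — possible since $k\geq 2t$ — every cross edge $a_{i_p}b_{i_q}$ with $p\leq t<q$ is present while both sides are independent, so $G$ contains an induced $K_{t,t}$.

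The one point that requires care, and the only place where the argument is not completely routine, is the half-graph case: there one cannot extract even an induced matching of size $2$ from the diagonal edges $a_{i_\ell}b_{i_\ell}$, so it does not suffice to hunt for an induced matching in every branch of the case analysis. The right observation is that a sufficiently long half-graph already contains a large induced biclique, which is why the bound $k\geq 2t$ (rather than $k\geq t$) is built into the choice of $k$. Everything else is a standard four-color Ramsey argument, and the resulting bound $\M(s,t)=\R\bigl(k,k,k,k\bigr)$ with $k=\max\set{s+1,2t}$ is of the expected (tower-type) order.
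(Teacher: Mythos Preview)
Your argument is correct and follows essentially the same Ramsey approach as the paper: color pairs of matching edges by the pattern of cross-edges and extract either an induced matching or an induced $K_{t,t}$ from a monochromatic set. The only difference is that the paper uses three colors rather than four (it lets color~1 record ``$a_ib_j\in E(G)$'' regardless of $a_jb_i$, thereby merging your ``both'' and one half-graph case), yielding the slightly tighter bound $\M(s,t)=\R(2t,2t,s+1)$ instead of your $\R(k,k,k,k)$ with $k=\max\set{s+1,2t}$.
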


\begin{proof}
Let $n=\M(s,t)=\R(2t,2t,s+1)$.
Let $G$ be a bipartite graph with vertex sets $A$ and $B$ on the two sides.
Let $M=\set{a_1b_1,\ldots,a_nb_n}$ be a matching of size $n$ in $G$, where $a_1,\ldots,a_n\in A$ and $b_1,\ldots,b_n\in B$.
Consider an auxiliary complete graph $K$ with vertex set $[n]$.
Color the edges of this graph with three colors as follows: for any $i,j\in[n]$ with $i<j$, let the color of the edge $ij$ be
\begin{itemize}
\item $1$ if $a_ib_j\in E(G)$,
\item $2$ if $a_ib_j\notin E(G)$ and $a_jb_i\in E(G)$,
\item $3$ if $a_ib_j\notin E(G)$ and $a_jb_i\notin E(G)$.
\end{itemize}
Since $n=\R(2t,2t,s+1)$, at least one of the following cases occurs.
\begin{enumerate}
\item There is a clique of size $2t$ with all edges of color $1$.
Such a clique on increasing indices $i_1,\ldots,i_t,j_1,\ldots,j_t$ implies an induced $K_{t,t}$ in $G$ on the vertices $a_{i_1},\ldots,a_{i_t}$ and $b_{j_1},\ldots,b_{j_t}$.
\item There is a clique of size $2t$ with all edges of color $2$.
Such a clique on increasing indices $j_1,\ldots,j_t,i_1,\ldots,i_t$ implies an induced $K_{t,t}$ in $G$ on the vertices $a_{i_1},\ldots,a_{i_t}$ and $b_{j_1},\ldots,b_{j_t}$.
\item There is a clique of size $s+1$ with all edges of color $3$.
Such a clique on increasing indices $i_1,\ldots,i_{s+1}$ implies an induced matching $\set{a_{i_1}b_{i_1},\ldots,a_{i_{s+1}}b_{i_{s+1}}}$ of size $s+1$ in $G$.\qedhere
\end{enumerate}
\end{proof}

In the next lemma, we show that from a number of \emph{very large} independent sets in a $K_{t,t}$-free graph, we can extract \emph{large} subsets such that their union is independent.

\begin{lemma}\label{lem:extract-independent-set}
For any positive integers\/ $s$, $t$, and\/ $m$, there is an integer\/ $\N(s,t,m)$ for which the following holds.
Let\/ $G$ be a\/ $K_{t,t}$-free graph, and let\/ $I_1,\ldots,I_m$ be independent sets in\/ $G$ each of size at least\/ $\N(s,t,m)$.
Then there is an independent set\/ $I$ in\/ $G$ such that\/ $\size{I\cap I_i}\geq s$ for all\/ $i\in[m]$.
\end{lemma}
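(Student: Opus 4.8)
The plan is to first prune each $I_i$ so that every surviving vertex has only a small neighborhood inside every other $I_j$, and then to build the desired sets greedily, one vertex at a time. It suffices to prove the statement with $\N(s,t,m)$ equal to any sufficiently large integer $n$, and we may replace each $I_i$ by an arbitrary $n$-element subset, so assume $|I_i|=n$ for all $i\in[m]$. For the pruning, fix $i\neq j$ and consider the bipartite graph on $(I_i,I_j)$ consisting of the edges of $G$ with one end in $I_i$ and the other in $I_j$. Since $I_i$ and $I_j$ are independent, any $K_{t,t}$ occurring in this bipartite graph as a (not necessarily induced) subgraph is an induced $K_{t,t}$ in $G$; as $G$ is $K_{t,t}$-free, this bipartite graph is $K_{t,t}$-subgraph-free, so by the K\H{o}v\'ari--S\'os--Tur\'an bound (equivalently, by a direct double counting of $t$-element subsets of neighborhoods, which could be included inline) it has at most $c_t\,n^{2-1/t}$ edges, where $c_t$ depends only on $t$. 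Consequently the set $B_{ij}\subseteq I_i$ of vertices with at least $n^{1-1/(2t)}$ neighbors in $I_j$ has at most $c_t\,n^{1-1/(2t)}$ elements. Putting $I_i^\ast=I_i\setminus\bigcup_{j\neq i}B_{ij}$, we get $|I_i^\ast|\ge n-(m-1)c_t\,n^{1-1/(2t)}\ge n/2$ once $n$ is large, and every $v\in I_i^\ast$ has at most $n^{1-1/(2t)}$ neighbors in each $I_j$ with $j\neq i$.

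For the greedy step, initialize $A_i:=I_i^\ast$ and $J_i:=\emptyset$ for each $i\in[m]$, and run $s$ rounds; in each round, for $i=1,\dots,m$ in turn, pick any vertex $v\in A_i$, add $v$ to $J_i$, and delete $N_G[v]$ from every $A_{i'}$. Deleting $N_G[v]$ from $A_i$ removes only $v$, as $I_i$ is independent, and deleting it from $A_{i'}$ with $i'\neq i$ removes at most $n^{1-1/(2t)}+1$ vertices because $v\in I_i^\ast$; so over all $sm$ picks each $A_{i'}$ loses at most $sm(n^{1-1/(2t)}+1)$ vertices, which is smaller than $n/2\le|I_{i'}^\ast|$ for $n$ large. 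Hence $A_i$ is never empty when we pick from it, and we finish with $|J_i|=s$ for every $i$. The set $I:=J_1\cup\dots\cup J_m$ is independent: whenever a vertex $v$ is added before a vertex $w$, we deleted $N_G[v]$ from all the $A_{i'}$ at the moment $v$ was added, in particular from the set that $w$ was later drawn from, so $w\notin N_G[v]$; thus the vertices of $I$ are pairwise distinct and nonadjacent. Since $J_i\subseteq I_i$ and $|J_i|=s$, we have $|I\cap I_i|\ge s$ for all $i$, and taking $\N(s,t,m)$ to be the least $n$ for which all of the above inequalities hold finishes the proof.

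The part that needs care is the order of operations. The tempting inductive approach -- reserve a set $R$ of $s$ vertices of $I_m$ to put into $I$ and recurse on $I_1,\dots,I_{m-1}$ -- fails as stated, because the independent set produced by the recursion may contain vertices adjacent to the reserved vertices $R$. Repairing this forces the reserved vertices of $I_m$ to have small neighborhood in each $I_i$, so that one can instead recurse on the sets $I_i\setminus N_G[R]$, which is exactly the K\H{o}v\'ari--S\'os--Tur\'an input; having committed to that input, it is cleaner to prune all pairs $(I_i,I_j)$ at once and then run a single simultaneous greedy, as above. (An inductive variant also works: use the pruning to choose $I_m'\subseteq I_m$ with $|I_m'|=s$ and $|N_G[I_m']\cap I_i|$ small for every $i<m$, and recurse on the sets $I_i\setminus N_G[I_m']$.)
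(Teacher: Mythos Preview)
Your proof is correct and takes a genuinely different route from the paper. The paper argues via Ramsey's theorem: it picks $n=\R(\underbrace{2t,\ldots,2t}_{\binom{m}{2}},ms)$ vertices from each $I_i$, colours the edges of $K_n$ according to which pair $(I_i,I_j)$ witnesses an edge of $G$ between the corresponding indexed vertices, and finds either a monochromatic $(i,j)$-clique of size $2t$ (yielding an induced $K_{t,t}$, impossible) or a clique of size $ms$ in the ``no-edge'' colour, from which the required $I$ is read off directly. Your argument instead uses the K\H{o}v\'ari--S\'os--Tur\'an bound to prune each $I_i$ to a subset whose vertices have few neighbours in every other $I_j$, and then builds $I$ greedily.

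The payoff of your approach is quantitative: the greedy step only needs $sm\bigl(n^{1-1/(2t)}+1\bigr)<n/2$ together with $(m-1)c_t\,n^{1-1/(2t)}<n/2$, so $\N(s,t,m)$ can be taken polynomial in $s$ and $m$ (with exponent $2t$), whereas the paper's multicolour Ramsey number with $\binom{m}{2}$ colours of size $2t$ grows at least exponentially in $m$. The paper's argument is shorter and more self-contained (no extremal input needed), and it yields the desired $I$ in one shot without an iterative construction. One minor point worth making explicit in your write-up: the sets $I_1,\ldots,I_m$ are not assumed disjoint, so when forming the bipartite graph on $(I_i,I_j)$ you should take a disjoint union of the two sides; a $K_{t,t}$ subgraph there must then use $2t$ distinct vertices of $G$ (a shared vertex would have to be adjacent to itself), and hence is induced in $G$, exactly as you need.
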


\begin{proof}
Let $n=\N(s,t,m)=\R(\overbrace{2t,\ldots,2t}^{\binom{m}{2}},ms)$.
For each $i\in[m]$, let $v^i_1,\ldots,v^i_n$ be arbitrary $n$ vertices from $I_i$.
Consider an auxiliary complete graph $K$ with vertex set $[n]$.
Color the edges of this graph with $\binom{m}{2}+1$ colors as follows: for any $k,\ell\in[n]$ with $k<\ell$, if there are indices $i,j\in[m]$ with $i<j$ such that $v^i_kv^j_\ell\in E(G)$, then color the edge $k\ell$ with such a pair $(i,j)$, otherwise color the edge $k\ell$ with a special color $c$.
It follows that $K$ contains a clique of size $2t$ with all edges of color $(i,j)$ for some $i,j\in[m]$ with $i<j$ or a clique of size $ms$ with all edges of the special color $c$.
In the former case, a clique in $K$ on increasing indices $k_1,\ldots,k_t,\ell_1,\ldots,\ell_t$ with all edges of color $(i,j)$ implies an induced $K_{t,t}$ in $G$ on the vertices $v^i_{k_1},\ldots,v^i_{k_t}$ and $v^j_{\ell_1},\ldots,v^j_{\ell_t}$, which contradicts the assumption that $G$ is $K_{t,t}$-free.
Thus $K$ contains a clique of size $ms$ on some increasing indices $k^1_1,\ldots,k^1_s,\ldots,k^m_1,\ldots,k^m_s$ with all edges of color $c$.
For each $i\in[m]$, let $I'_i=\set{v^i_{\smash[t]{k^i_1}},\ldots,v^i_{\smash[t]{k^i_s}}}$.
It follows that $v^i_kv^j_\ell\notin E(G)$ for all $i,j\in[m]$ and all $v^i_k\in I'_i$ and $v^j_\ell\in I'_j$.
We let $I=I'_1\cup\cdots\cup I'_m$ and conclude that $I$ is an independent set in $G$ with $\size{I\cap I_i}\geq\size{I'_i}=s$ for all $i\in[m]$.
\end{proof}

Now, we proceed to the main theorem of this section.

\thmbiclique*

\begin{proof}
Let $\M(\cdot,\cdot)$ and $\N(\cdot,\cdot,\cdot)$ be as claimed in \cref{lem:extract-induced-matching,lem:extract-independent-set}, respectively.
Let
\begin{align*}
\C(\mu,t)&=\N(\M(\mu,t),\:t,\:\M(\mu,t))\text{,} \\
\K(\mu,t)&=2\cdot\M(\mu,t)+\mu\cdot\C(\mu,t)\text{.}
\end{align*}
Let $G$ be a $K_{t,t}$-free graph, and let $\cT=(T,\beta)$ be a tree decomposition of $G$ with $\mu(\cT)\leq\mu$.
We aim to show that $\treealpha(G)<\K(\mu,t)$.
Let $S$ be a maximum independent set in $G$.

\begin{claim}
\label{clm:max-ind-set-helper}
For every node\/ $x$ of\/ $T$, it holds that\/ $\alpha(\beta(x)\setminus S)<\M(\mu,t)$.
\end{claim}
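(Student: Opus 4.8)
The plan is to argue by contradiction. Suppose some node $x$ of $T$ satisfies $\alpha(\beta(x)\setminus S)\geq\M(\mu,t)$, and fix an independent set $A\subseteq\beta(x)\setminus S$ with $\size{A}=\M(\mu,t)$. The point is that, since $S$ is a \emph{maximum} independent set and $A$ is disjoint from $S$, the set $A$ must be richly connected to $S$; I would extract from this a large matching between $A$ and $S$ and then feed it to \cref{lem:extract-induced-matching}.

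First I would consider the bipartite graph $H\coloneqq G[A\cup S]$: since both $A$ and $S$ are independent, every edge of $H$ runs between $A$ and $S$, so $H$ is genuinely bipartite with parts $A$ and $S$, and moreover every subgraph induced in $H$ is also induced in $G$. I claim Hall's condition holds for $H$ with respect to $A$. Indeed, for any $A_0\subseteq A$ the set $A_0\cup(S\setminus N(A_0))$ is a union of two disjoint independent sets with no edges between them, hence independent in $G$, so maximality of $S$ gives $\size{A_0}+\size{S}-\size{N(A_0)\cap S}\leq\size{S}$, i.e.\ $\size{N_H(A_0)}=\size{N(A_0)\cap S}\geq\size{A_0}$. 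By Hall's theorem, $H$ contains a matching saturating $A$, and this matching has size $\size{A}=\M(\mu,t)$.

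Then I would apply \cref{lem:extract-induced-matching} to the bipartite graph $H$ with parameters $s=\mu$ and $t$, obtaining in $H$ either an induced $K_{t,t}$ or an induced matching of size $\mu+1$. In the first case, since $H$ is an induced subgraph of $G$, this is an induced $K_{t,t}$ in $G$, contradicting $K_{t,t}$-freeness. In the second case, we get an induced matching $M$ of size $\mu+1$ in $G$; as $M$ lies in the bipartite graph $H$ with parts $A$ and $S$, every edge of $M$ has an endpoint in $A\subseteq\beta(x)$, so $M$ touches $\beta(x)$, whence $\mu(\cT)\geq\mu+1>\mu$, a contradiction. Either way we reach a contradiction, which proves the claim. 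The only mildly delicate steps are the derivation of Hall's condition from the maximality of $S$ and the observation that $H=G[A\cup S]$ (so that ``induced'' transfers from $H$ back to $G$); I expect the Hall-condition argument to be the conceptual crux, while everything else is routine.
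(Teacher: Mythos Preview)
Your proof is correct and follows essentially the same approach as the paper: find a large matching in the bipartite graph between a large independent set in $\beta(x)\setminus S$ and $S$, then invoke \cref{lem:extract-induced-matching}. The only cosmetic difference is that you verify Hall's condition to obtain the matching, whereas the paper applies K\H{o}nig's theorem to compute its size as $\size{S\cup I}-\alpha(S\cup I)=\size{I}$; these are equivalent.
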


\begin{claimproof}
Fix a node $x$ of $T$, and let $I$ be a maximum independent set in $\beta(x)\setminus S$.
Since the graph $G[S\cup I]$ is bipartite with parts $S$ and $I$, Kőnig's theorem guarantees a matching of size $\size{S\cup I}-\alpha(S\cup I)$ in $G[S\cup I]$.
Since $S$ is a maximum independent set in $G$ and therefore in $S\cup I$, the aforesaid matching in $G[S\cup I]$ has size $\size{S\cup I}-\size{S}=\size{I}$.
Suppose, towards a contradiction, that $\size{I}\geq\M(\mu,t)$.
Since $G$ is $K_{t,t}$-free, \cref{lem:extract-induced-matching} guarantees an induced matching of size at least $\mu+1$ in $G[S\cup I]$.
This matching touches $\beta(x)$, contrary to the assumption that $\mu(\cT)\leq\mu$.
We conclude that $\alpha(\beta(x)\setminus S)=\size{I}<\M(\mu,t)$.
\end{claimproof}

Let a vertex $v$ of $G$ be called \emph{light} if $\alpha(N(v))<\C(\mu,t)$ and \emph{heavy} otherwise.
Let $S_\ell$ and $S_h$ be the subsets of $S$ comprising the light vertices and the heavy vertices, respectively, so that $S=S_\ell\cup S_h$.
In the next two claims, we analyze the interaction of the vertices from $S_\ell$ and $S_h$ with a single bag of $\cT$.

\begin{claim}\label{clm:light}
For every node\/ $x$ of\/ $T$, it holds that\/ $\alpha(N(\beta(x)\cap S_\ell))<\mu\cdot\C(\mu,t)$.
\end{claim}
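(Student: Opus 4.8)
The plan is to argue by contradiction: assuming $\alpha(N(\beta(x)\cap S_\ell))\geq\mu\cdot\C(\mu,t)$, I will produce an induced matching of size $\mu+1$ that touches $\beta(x)$, contradicting $\mu(\cT)\leq\mu$. Write $W=\beta(x)\cap S_\ell$ and fix an independent set $I\subseteq N(W)$ of size at least $\mu\cdot\C(\mu,t)$. Note that $W\subseteq S$ is independent and that $I\cap S=\emptyset$ (since $I\subseteq N(W)\subseteq N(S)$ and $N(S)\cap S=\emptyset$), so $I$ and $W$ are disjoint independent sets, and $G[I\cup W]$ is bipartite.

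The engine of the proof is a minimal set-cover argument. Since every $w\in W$ is light, $\alpha(N(w))<\C(\mu,t)$; and since $I$ is independent, $N(w)\cap I$ is an independent subset of $N(w)$, so $\size{N(w)\cap I}\leq\C(\mu,t)-1$. The family $\set{N(w)\cap I : w\in W}$ covers $I$ (because $I\subseteq N(W)$), so I would pass to an inclusion-minimal subfamily $N(w_1)\cap I,\dots,N(w_r)\cap I$ still covering $I$, with distinct $w_1,\dots,w_r\in W$. Covering a set of size at least $\mu\cdot\C(\mu,t)$ by sets of size at most $\C(\mu,t)-1$ requires strictly more than $\mu$ of them, so $r\geq\mu+1$.

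The payoff of minimality is that each $w_i$ owns a private vertex $u_i\in (N(w_i)\cap I)\setminus\bigcup_{j\neq i}(N(w_j)\cap I)$. Then $u_1,\dots,u_r$ are pairwise distinct, each $u_iw_i\in E(G)$, and $u_iw_j\notin E(G)$ for all $j\neq i$ — which, together with the fact that $I$ and $W$ are independent (and that the $u_i$'s and $w_i$'s are $2r$ distinct vertices), makes $\set{u_1w_1,\dots,u_{\mu+1}w_{\mu+1}}$ an induced matching of size $\mu+1$; it touches $\beta(x)$ since each $w_i\in\beta(x)$, yielding the contradiction.

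The step I would flag as the crux is ruling out cross-edges $u_iw_j$ in the would-be matching: a naive greedy extraction of matching edges controls the edges $u_jw_i$ but not $u_iw_j$, and it is precisely the inclusion-minimality of the cover (equivalently, the private vertices) that handles this, at the cost of the multiplicative factor $\mu$. The only calculation to be careful with is that $\C(\mu,t)\geq 2$, so that dividing by $\C(\mu,t)-1$ is legitimate; the degenerate case $\C(\mu,t)=1$, in which every light vertex is isolated, makes the statement trivially true.
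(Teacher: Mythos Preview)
Your argument is correct and is essentially the same as the paper's: both pick a minimal subset of $\beta(x)\cap S_\ell$ whose neighborhoods cover the large independent set $I$, use minimality to extract private neighbors, and observe that these pairs form an induced matching touching $\beta(x)$, forcing the cover to have size at most $\mu$ and hence $\size{I}<\mu\cdot\C(\mu,t)$. The only cosmetic difference is that the paper phrases it as a direct bound on $\size{I}$ (bounding $\size{U}\leq\mu$ first, then summing $\size{I\cap N(s)}<\C(\mu,t)$ over $s\in U$), whereas you phrase it as a contradiction; the logical content is identical.
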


\begin{claimproof}
Fix a node $x$ of $T$, and let $S^x_\ell=\beta(x)\cap S_\ell$.
Let $I$ be a maximum independent set in $N(S^x_\ell)$.
We aim to show that $\size{I}<\mu\cdot\C(\mu,t)$.
Let $U$ be an inclusion-minimal subset of $S^x_\ell$ such that $I\subseteq N(U)$.
By the minimality of $U$, for every $s\in U$, there is a vertex $v_s\in I$ such that $N(v_s)\cap U=\set{s}$.
Since both $U$ and $I$ are independent sets and $U\subseteq\beta(x)$, the set $\set{sv_s\mid s\in U}$ is an induced matching of size $\size{U}$ in $G$ that touches $\beta(x)$.
Consequently, $\size{U}\leq\mu(\cT)\leq\mu$.
Since $U\subseteq S^x_\ell\subseteq S_\ell$, all vertices in $U$ are light, whence it follows that $\size{I\cap N(s)}<\C(\mu,t)$ for every $s\in U$.
We conclude that
\[\alpha(N(\beta(x)\cap S_\ell))=\size{I}\leq\sum_{s\in U}\size{I\cap N(s)}<\size{U}\cdot\C(\mu,t)\leq\mu\cdot\C(\mu,t)\text{.}\qedhere\]
\end{claimproof}

\begin{claim}\label{clm:heavy}
For every node\/ $x$ of\/ $T$, it holds that\/ $\size{\beta(x)\cap S_h}<\M(\mu,t)$.
\end{claim}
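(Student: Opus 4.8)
The plan is to argue by contradiction, in the same spirit as \cref{clm:max-ind-set-helper}: assuming $\size{\beta(x)\cap S_h}\ge\M(\mu,t)$, I will produce an induced matching of size $\mu+1$ that touches $\beta(x)$, contradicting the hypothesis $\mu(\cT)\le\mu$. The role of the heavy vertices is that each of them has a \emph{very large} independent set sitting inside its neighborhood, and \cref{lem:extract-independent-set} is exactly the tool that lets many such independent sets be combined into one independent set meeting each of them in a still-substantial chunk.

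Concretely, set $m=\M(\mu,t)$ and pick distinct vertices $v_1,\dots,v_m\in\beta(x)\cap S_h$. Since each $v_i$ is heavy, $\alpha(N(v_i))\ge\C(\mu,t)=\N(\M(\mu,t),t,\M(\mu,t))$, so I can fix an independent set $I_i\subseteq N(v_i)$ with $\size{I_i}\ge\C(\mu,t)$. Now apply \cref{lem:extract-independent-set} with the $K_{t,t}$-free graph $G$, the independent sets $I_1,\dots,I_m$, and parameters $s=\M(\mu,t)$ and $m=\M(\mu,t)$ — this is precisely why $\C(\mu,t)$ was defined as it was. The lemma yields an independent set $I$ in $G$ with $\size{I\cap I_i}\ge\M(\mu,t)$ for every $i\in[m]$.

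It remains to extract the induced matching. Let $B'=\set{v_1,\dots,v_m}$; it is independent because $B'\subseteq S$. Since $S$ is independent, no $v_i$ has a neighbor in $B'$, so (writing $I^*=I\setminus B'$, which is still independent) we have $N(v_i)\cap I^*=N(v_i)\cap I\supseteq I\cap I_i$, hence $\size{N(v_i)\cap I^*}\ge m$. Thus $G[B'\cup I^*]$ is a bipartite induced subgraph of $G$ with sides $B'$ and $I^*$ in which every one of the $m$ vertices of $B'$ has at least $m$ neighbors; a greedy argument (or Hall's theorem) gives a matching of size $m=\M(\mu,t)$ in it. As $G$, and therefore $G[B'\cup I^*]$, is $K_{t,t}$-free, \cref{lem:extract-induced-matching} with $s=\mu$ upgrades this to an induced matching of size $\mu+1$ in $G[B'\cup I^*]$, hence in $G$; every edge of it has an endpoint in $B'\subseteq\beta(x)$, so it touches $\beta(x)$, contradicting $\mu(\cT)\le\mu$. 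I do not expect a genuine obstacle here: the only things to be careful about are that the parameters fed to \cref{lem:extract-independent-set} match the definition of $\C(\mu,t)$, and that $B'$ and the extracted independent set can be taken disjoint so that $G[B'\cup I^*]$ is honestly bipartite — both handled by the observation that $B'\subseteq S$ is independent.
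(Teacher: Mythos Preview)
Your proof is correct and follows essentially the same approach as the paper's: pick $m=\M(\mu,t)$ heavy vertices in $\beta(x)\cap S_h$, use \cref{lem:extract-independent-set} to merge large independent sets from their neighborhoods, find a matching of size $m$ in the resulting bipartite graph, and upgrade it via \cref{lem:extract-induced-matching} to an induced matching of size $\mu+1$ touching $\beta(x)$. Your explicit removal of $B'$ from $I$ to ensure honest bipartiteness is a small extra care that the paper omits (it is harmless there since the $I$ produced in the proof of \cref{lem:extract-independent-set} is contained in $\bigcup_i I_i$ and hence disjoint from $S$).
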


\begin{claimproof}
Fix a node $x$ of $T$, and let $S^x_h=\beta(x)\cap S_h$.
Let $m=\M(\mu,t)$.
We aim to show that $\size{S^x_h}<m$.
Suppose, to the contrary, that $\size{S^x_h}\geq m$, and let $s_1,\ldots,s_m$ be arbitrary $m$ vertices from $S^x_h$.
For each $i\in[m]$, let $I_i$ be a maximum independent set in $N(s_i)$;
since the vertex $s_i$ is heavy, we have $\size{I_i}\geq\C(\mu,t)=\N(m,t,m)$.
\Cref{lem:extract-independent-set} applied to $I_1,\ldots,I_m$ provides an independent set $I$ in $G$ such that $\size{I\cap I_i}\geq m$ for all $i\in[m]$.
Consider the bipartite graph $H=G[\set{s_1,\ldots,s_m}\cup I]$.
Since $\size{N_H(s_i)}=\size{I\cap I_i}\geq m$ for all $i\in[m]$, we can greedily find a~matching of size $m$ in $H$.
Since $H$ is $K_{t,t}$-free, \cref{lem:extract-induced-matching} provides an induced matching of size $\mu+1$ in $H$.
However, since $s_1,\ldots,s_m\in\beta(x)$, such a matching touches the bag $\beta(x)$, contrary to the assumption that $\mu(\cT)\leq\mu$.
We conclude that $\size{S^x_h}<m$.
\end{claimproof}

Recall that for a vertex $v$ of $G$, the subgraph of $T$ induced by the nodes that contain $v$ in their bags is denoted by $T_v$; since $\cT$ is a tree decomposition, $T_v$ is a nonempty tree.
We now construct a tree decomposition $\cT'=(T',\beta')$ of $G$ as follows.
\begin{itemize}
\item The tree $T'$ is obtained from $T$ by adding, for every $s\in S_\ell$, a new leaf node $y_s$ adjacent to some node $x_s$ of $T_s$.
\item For every node $x$ of $T$, we set $\beta'(x)=(\beta(x) \setminus S_\ell)\cup N(\beta(x)\cap S_\ell)$.
\item For every vertex $s\in S_\ell$, we set $\beta'(y_s)=N[s]$.
\end{itemize}
In the next two claims, we show that $\cT'$ is a tree decomposition of $G$ and $\alpha(\cT')<\K(\mu,t)$.

\begin{claim}\label{clm:treedec}
$\cT'$ is a tree decomposition of\/ $G$.
\end{claim}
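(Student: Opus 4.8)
The plan is to verify the three tree decomposition axioms for $\cT'=(T',\beta')$ directly, using that $\cT=(T,\beta)$ already satisfies them. First, for the \emph{vertex condition}: every vertex $v\in V(G)$ must appear in some bag $\beta'(x)$. I would split into cases. If $v\notin S_\ell$, pick a node $x\in V(T)$ with $v\in\beta(x)$ (which exists for $\cT$); then $v\in\beta(x)\setminus S_\ell\subseteq\beta'(x)$. If $v\in S_\ell$, then $v\in N[v]=\beta'(y_v)$, so $v$ is covered by its private leaf.

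Next, the \emph{edge condition}: for every edge $uv\in E(G)$, some bag $\beta'(x)$ contains both $u$ and $v$. Again I would case on how many endpoints lie in $S_\ell$; note that since $S_\ell\subseteq S$ is independent, an edge has at most one endpoint in $S_\ell$. If neither endpoint is in $S_\ell$, take a node $x$ of $T$ with $u,v\in\beta(x)$; then both survive into $\beta'(x)$. If exactly one endpoint, say $v\in S_\ell$ and $u\notin S_\ell$, I have two sub-options that both work: either use the leaf $y_v$, whose bag is $N[v]\ni u,v$; or observe that $u\in N(v)\subseteq N(\beta(x_0)\cap S_\ell)\subseteq\beta'(x_0)$ where $x_0$ is any node of $T$ containing $v$, and $u\in\beta'(x_0)$ as well if $u\in\beta(x_0)$. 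The cleanest route is to just invoke the leaf $y_v$: it is present in $T'$ and its bag $N[v]$ contains every edge incident to $v$.

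The main work — and the part I expect to be the only real obstacle — is the \emph{connectivity condition}: for each $v\in V(G)$, the set of nodes of $T'$ whose bag contains $v$ must induce a subtree $T'_v$. Here I would again split on whether $v\in S_\ell$. If $v\in S_\ell$: the old nodes $x\in V(T)$ with $v\in\beta'(x)$ are exactly those with $v\in\beta(x)$ (since $v$ is removed from every old bag, it can only re-enter via $N(\beta(x)\cap S_\ell)$, but $v$ is not a neighbor of itself, and if $v\in N(s)$ for some $s\in S_\ell\cap\beta(x)$ then $vs\in E(G)$, contradicting independence of $S_\ell$) — so the old part of $T'_v$ is exactly $T_v$, a subtree; and the one new node $y_v$ is attached to $x_v\in T_v$, so $T'_v=T_v\cup\{y_v\}$ is still a subtree; note no other leaf $y_s$ with $s\neq v$ has $v$ in its bag $N[s]$, since $v\in N[s]$ would force $vs\in E(G)$ or $v=s$, again impossible. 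If $v\notin S_\ell$: then $v$ never lies in any leaf bag $N[s]=\beta'(y_s)$ unless $v\in N(s)$ for some $s\in S_\ell$, i.e.\ $vs\in E(G)$; but $\beta'(y_s)=N[s]$ is a single bag, so each such $y_s$ contributes at most one isolated-looking node which is attached to $x_s\in T_s$. The key point is that $v\in\beta'(x_s)$ automatically: since $vs\in E(G)$ and $s\in\beta(x_s)\cap S_\ell$ — wait, $x_s$ need only lie in $T_s$, so $s\in\beta(x_s)$, hence $v\in N(\beta(x_s)\cap S_\ell)\subseteq\beta'(x_s)$ \emph{provided} $v\notin S_\ell$, which holds. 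So whenever $y_s$ is in $T'_v$, so is its unique neighbor $x_s$, meaning adding these leaves cannot disconnect $T'_v$. It remains to check that the old part $\{x\in V(T): v\in\beta'(x)\}$ is connected: this set equals $\{x: v\in\beta(x)\}\cup\{x: v\in N(\beta(x)\cap S_\ell)\}=T_v\cup\bigcup_{s\in S_\ell,\,vs\in E(G)}\{x\in T_s: v\in\beta'(x)\}$. For each such neighbor $s$ of $v$ in $S_\ell$, the edge $vs$ forces $v,s$ to share a bag in $\cT$, so $T_v\cap T_s\neq\emptyset$, and every node of $T_s$ has $v\in\beta'$ (as just argued), so $T_s\subseteq$ the old part; since $T_v$ and each such $T_s$ are subtrees pairwise meeting $T_v$, their union is connected. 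Assembling these observations gives that $T'_v$ is a subtree in every case, completing the verification.
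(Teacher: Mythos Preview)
Your approach is essentially the same as the paper's: verify the three tree-decomposition axioms by the same case splits, using the leaves $y_s$ to cover vertices and edges in $S_\ell$ and, for connectivity in the case $v\in N(S_\ell)$, using that $T_v$ and each relevant $T_s$ share a node.

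There is one slip in your $v\in S_\ell$ case. You write that the old nodes $x\in V(T)$ with $v\in\beta'(x)$ are ``exactly those with $v\in\beta(x)$'', and conclude that the old part of $T'_v$ is $T_v$. But your own parenthetical argument shows the opposite: $v$ is removed from every old bag (since $v\in S_\ell$) and cannot re-enter via $N(\beta(x)\cap S_\ell)$ (since $S_\ell$ is independent). Hence the set of old nodes containing $v$ is \emph{empty}, not $T_v$, and in fact $T'_v=\{y_v\}$, a single node---which is what the paper observes. Your stated $T'_v=T_v\cup\{y_v\}$ is simply the wrong set; the connectivity you verify for it does not, in principle, imply connectivity of the actual $T'_v$. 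The fix is trivial (a single node is connected), but as written your reasoning contradicts its own conclusion.
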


\begin{claimproof}
We show that $\cT'$ satisfies the three conditions on a tree decomposition listed in \cref{sec:prelim}, that is, every vertex of $G$ is in a bag, every edge of $G$ is in a bag, and for every vertex $v$ of $G$, the subgraph $T'_v$ of $T'$ induced by the nodes $x$ with $v\in\beta'(x)$ is connected.

Consider a vertex $v$ of $G$.
If $v\in S_\ell$, then $v\in\beta'(y_v)$.
If $v\notin S_\ell$, then, since $\cT$ is a tree decomposition of $G$, there is a node $x$ of $T$ such that $v\in\beta(x)$ and thus $v\in\beta(x)\setminus S_\ell\subseteq\beta'(x)$.

Next, consider an edge $uv$ of $G$.
If $\set{u,v}\cap S_\ell=\emptyset$, then there is a node $x$ of $T$ such that $u,v\in\beta(x)$, and consequently $u,v\in\beta(x)\setminus S_\ell\subseteq\beta'(x)$.
If $\set{u,v}\cap S_\ell\neq\emptyset$, then there is unique $s\in\set{u,v}\cap S_\ell$, and we have $\set{u,v}\subseteq N[s]=\beta'(y_s)$.

Finally, fix a vertex $v$ of $G$, and consider the subgraph $T'_v$ of $T'$.
If $v\in S_\ell$, then $V(T'_v)=\set{y_s}$, and so $T'_v$ is connected.
If $v\in V(G)\setminus N[S_\ell]$, then $T'_v=T_v$, and so $T'_v$ is connected, as $\cT$ is a~tree decomposition of $G$.
Now, suppose $v\in N(S_\ell)$.
In this case,
\[V(T'_v)=V(T_v)\cup\bigcup_{s\in S_\ell\cap N(v)}(V(T_s)\cup\set{y_s})\text{.}\]
By the construction of $\cT'$, for each $s\in S_\ell$, the subgraph of $T'$ induced by $V(T_s)\cup\set{y_s}$ is connected.
Furthermore, since $\cT$ is a tree decomposition of $G$, for each $s\in S_\ell\cap N(v)$, there is a node $x$ of $T$ such that $s,v\in\beta(x)$.
Therefore, $x\in V(T_v)\cap V(T_s)$, and in particular the set $V(T_v) \cup V(T_s)\cup\set{y_s}$ induces a connected subgraph of $T'$.
Consequently, $T'_v$ is connected.

Summing up, $\cT'$ is indeed a tree decomposition of $G$.
\end{claimproof}

\begin{claim}\label{clm:treealpha}
$\alpha(\cT')<\K(\mu,t)$.
\end{claim}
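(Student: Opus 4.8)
The plan is to bound $\alpha(\beta'(x))$ separately for the two kinds of nodes of $T'$: the new leaf nodes $y_s$ and the old nodes $x$ of $T$. For a leaf node $y_s$ with $s \in S_\ell$, the bag is $\beta'(y_s) = N[s]$, and since $s$ is light we have $\alpha(N(s)) < \C(\mu,t)$; adding the single vertex $s$ gives $\alpha(\beta'(y_s)) \le \C(\mu,t) \le \K(\mu,t)$, which is well within the target. So the real work is bounding $\alpha(\beta'(x))$ for $x \in V(T)$, where $\beta'(x) = (\beta(x) \setminus S_\ell) \cup N(\beta(x) \cap S_\ell)$.

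For an old node $x$, I would write $\beta(x) \setminus S_\ell = (\beta(x) \setminus S) \cup (\beta(x) \cap S_h)$, so that
\[
\beta'(x) \subseteq (\beta(x) \setminus S) \;\cup\; (\beta(x) \cap S_h) \;\cup\; N(\beta(x) \cap S_\ell).
\]
An independent set in $\beta'(x)$ splits, according to this cover, into at most three pieces whose independence numbers are controlled by the three claims already proved: $\alpha(\beta(x) \setminus S) < \M(\mu,t)$ by \cref{clm:max-ind-set-helper}, $\alpha(\beta(x) \cap S_h) \le \size{\beta(x) \cap S_h} < \M(\mu,t)$ by \cref{clm:heavy}, and $\alpha(N(\beta(x) \cap S_\ell)) < \mu \cdot \C(\mu,t)$ by \cref{clm:light}. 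Since the independence number of a union is at most the sum of the independence numbers of the parts, we get
\[
\alpha(\beta'(x)) \le \alpha(\beta(x)\setminus S) + \size{\beta(x)\cap S_h} + \alpha(N(\beta(x)\cap S_\ell)) < 2\M(\mu,t) + \mu\cdot\C(\mu,t) = \K(\mu,t),
\]
exactly matching the definition of $\K(\mu,t)$. Combining the two cases yields $\alpha(\cT') < \K(\mu,t)$.

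I do not expect a genuine obstacle here: the three claims were evidently set up precisely so that this final counting goes through, and the only thing to be careful about is the bookkeeping in the decomposition $\beta(x) \setminus S_\ell = (\beta(x)\setminus S) \cup (\beta(x) \cap S_h)$ — which holds because $S_\ell$ and $S_h$ partition $S$, so $V(G) \setminus S_\ell = (V(G)\setminus S) \cup S_h$. The one place that merits a sentence of justification is the subadditivity of $\alpha$ over a cover of the vertex set: if $W = W_1 \cup W_2 \cup W_3$ and $J$ is independent in $G[W]$, then $J = (J\cap W_1) \cup (J\cap W_2) \cup (J\cap W_3)$ and each $J \cap W_i$ is independent in $G[W_i]$, so $\size{J} \le \sum_i \alpha(W_i)$. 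With that remark in place the claim, and hence (together with \cref{clm:treedec}) the theorem, follows.
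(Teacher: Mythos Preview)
Your proposal is correct and matches the paper's proof essentially line for line: the same two cases (leaf nodes $y_s$ versus old nodes $x$), the same decomposition $\beta'(x)=(\beta(x)\setminus S)\cup(\beta(x)\cap S_h)\cup N(\beta(x)\cap S_\ell)$, and the same appeal to \cref{clm:max-ind-set-helper,clm:heavy,clm:light} for the three summands. The only cosmetic difference is that the paper observes directly that $\alpha(N[s])<\C(\mu,t)$ (since $s$ is adjacent to every vertex of $N(s)$, any independent set containing $s$ has size~$1$), whereas you allow the looser bound $\alpha(N[s])\leq\C(\mu,t)$; either way this is strictly less than $\K(\mu,t)$ because $\M(\mu,t)\geq 1$.
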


\begin{claimproof}
Consider a node $x$ of $T'$.
If $x=y_s$ for some $s\in S_\ell$, then, since $s$ is light, we have $\alpha(N(s))<\C(\mu,t)$ and therefore
\[\alpha(\beta'(x))=\alpha(N[s])<\C(\mu,t)\leq\K(\mu,t)\text{.}\]
Now, suppose that $x$ is a node of $T$.
Since $S=S_h\cup S_\ell$, it follows that
\[\beta'(x)=(\beta(x)\setminus S_\ell)\cup N(\beta(x)\cap S_\ell)=(\beta(x)\setminus S)\cup(\beta(x)\cap S_h)\cup N(\beta(x)\cap S_\ell)\text{.}\]
We bound the maximum size of an independent set in each of the three terms above separately.
By \cref{clm:max-ind-set-helper}, we have $\alpha(\beta(x)\setminus S)<\M(\mu,t)$.
By \cref{clm:heavy}, we have $\size{\beta(x)\cap S_h}<\M(\mu,t)$.
Finally, by \cref{clm:light}, we have $\alpha(N(\beta(x)\cap S_\ell))<\mu\cdot\C(\mu,t)$.
Summing up, we conclude that
\[\alpha(\beta'(x))<2\cdot\M(\mu,t)+\mu\cdot\C(\mu,t)=\K(\mu,t)\text{.}\qedhere\]
\end{claimproof}

Now, the theorem follows directly from \cref{clm:treedec,clm:treealpha}.
\end{proof}

\section{Proof of \texorpdfstring{\cref{thm:chibdd}}{Theorem \ref{thm:chibdd}}}\label{sec:chibdd}
A \emph{layering} of a graph $G$ is a finite sequence of sets $L_0,\ldots,L_t$ with the following properties:
\begin{itemize}
\item they form a partition of the vertex set of $G$, that is, $\bigcup_{i=0}^tL_i=V(G)$ and $L_i\cap L_j=\emptyset$ for $i\neq j$;
\item for every edge $uv$ of $G$, if $u\in L_i$ and $v\in L_j$, then $\size{i-j}\leq 1$.
\end{itemize}
The following well-known lemma asserts that in order to bound the chromatic number of $G$, it is sufficient to focus on the graphs induced by individual layers of a layering of $G$.
Although its proof is quite standard (see, e.g.,~\cite{Gyarfas85}), we include it for completeness.

\begin{lemma}[folklore]\label{lem:layering}
Let\/ $G$ be a graph and\/ $L_0,\ldots,L_t$ be a layering of\/ $G$.
Then
\[\chi(G)\leq 2\max\set{\chi(L_0),\ldots,\chi(L_t)}\text{.}\]
\end{lemma}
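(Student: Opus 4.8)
The plan is to color each layer independently and then merge these colorings so that no edge becomes monochromatic. Fix an optimal proper coloring $c_i$ of the induced subgraph $G[L_i]$ using colors from the set $[\chi(L_i)]$, and let $k=\max\set{\chi(L_0),\ldots,\chi(L_t)}$ so that each $c_i$ uses colors from $[k]$. The idea is to combine $c_i$ with a coarse "parity" coloring of the layers themselves: define a new coloring $c$ of $G$ by $c(v)=(c_i(v),\,i\bmod 2)$ for $v\in L_i$. This uses at most $2k$ colors in total, matching the claimed bound $\chi(G)\le 2\max\set{\chi(L_0),\ldots,\chi(L_t)}$.

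It remains to verify that $c$ is a proper coloring of $G$. Consider any edge $uv$ of $G$ with $u\in L_i$ and $v\in L_j$. By the definition of a layering, $\size{i-j}\le 1$, so either $i=j$ or $\size{i-j}=1$. If $i=j$, then $uv$ is an edge of $G[L_i]$, and since $c_i$ is a proper coloring of $G[L_i]$ we have $c_i(u)\ne c_i(v)$, hence the first coordinates of $c(u)$ and $c(v)$ differ. If $\size{i-j}=1$, then $i$ and $j$ have different parities, so the second coordinates of $c(u)$ and $c(v)$ differ. In either case $c(u)\ne c(v)$, so $c$ is proper.

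There is no real obstacle here; the only thing to be slightly careful about is bookkeeping with the number of colors. One should note that the total number of colors used is at most $2k$ even if some layers use fewer than $k$ colors or are empty, since the color set $[k]\times\set{0,1}$ has exactly $2k$ elements and every color assigned by $c$ lies in this set. This yields $\chi(G)\le 2k=2\max\set{\chi(L_0),\ldots,\chi(L_t)}$, as desired. (A cosmetic alternative, avoiding pairs, is to use colors $c_i(v)$ for $v$ in even-indexed layers and $k+c_i(v)$ for $v$ in odd-indexed layers; this is the same argument phrased with integers in $[2k]$.)
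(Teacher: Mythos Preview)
Your proof is correct and essentially identical to the paper's own argument: both take proper colorings $c_i$ of the layers with at most $k$ colors, define the combined coloring $c(v)=(c_i(v),\,i\bmod 2)$, and verify properness by distinguishing the cases $i=j$ (first coordinate differs) and $\size{i-j}=1$ (second coordinate differs).
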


\begin{proof}
Let $k=\max\set{\chi(L_0),\ldots,\chi(L_t)}$.
For each $i\in\set{0,\ldots,t}$, let $\varphi_i\colon L_i\to[k]$ be a proper coloring of $G[L_i]$ with $k$ colors.
Define a coloring $\varphi$ of $G$ by setting $\varphi(v)=(\varphi_i(v),\:i\bmod 2)$, where $i$ is the unique index in $\set{0,\ldots,t}$ such that $v\in L_i$.
We claim that $\varphi$ is a proper coloring of $G$.
Indeed, consider an edge $uv$ of $G$, and let $i$ and $j$ be the unique indices with $u\in L_i$ and $v\in L_j$.
If $i=j$, then $\varphi_i(u)\neq\varphi_i(v)$, so $\varphi(u)\neq\varphi(v)$.
Otherwise, we have $\size{i-j}=1$, so $i\not\equiv j\pmod 2$, which implies $\varphi(u)\neq\varphi(v)$.
\end{proof}

The basic strategy in our proof of \cref{thm:chibdd} is to consider a variant of ``breadth-first search layering'' obtained by starting from two adjacent vertices rather than a single vertex, and to focus on coloring a single layer and understanding how it relates to the (fixed) tree decomposition of small $\yolov$.
Such a single layer is colored by splitting it into two sets -- one corresponding to part of the tree decomposition ``shared'' with prior layers of the layering and the other corresponding to what is decomposed ``on the fringes'' of the decomposition.

\thmchibdd*

\begin{proof}
We prove that there is a function $f\colon(\bbN\cup\set{0})\times\bbN\to\bbN$ such that every graph $G$ with $\yolov(G)\leq\mu$ and $\omega(G)\leq\omega$ satisfies $\chi(G)\leq f(\mu,\omega)$.
We proceed by induction on $\mu+\omega$.
The base cases are clear: every graph with $\yolov(G)=0$ or $\omega(G)=1$ is edgeless and thus can be properly colored with one color.
Now, fix two integers $\mu\geq 1$ and $\omega\geq 2$, and assume that the values $f(\mu',\omega')$ have the required property whenever $\mu'+\omega'<\mu+\omega$.
Let $G$ be a graph with $\yolov(G)\leq\mu$ and $\omega(G)\leq\omega$.
Since our task is to construct a proper coloring of $G$, we can assume that $G$ is connected (otherwise we can color each connected component separately) and has at least two vertices.
Observe that for every vertex $v$ of $G$, since $\omega(N(v))\leq\omega-1$, the induction hypothesis implies that $\chi(N(v))\leq f(\mu,\omega-1)$.

Fix a tree decomposition $\cT=(T,\beta)$ of $G$ with $\mu(\cT)\leq\mu$.
Let $u_0v_0$ be an arbitrary edge of $G$.
Let $L_0=\set{u_0,v_0}$, and for $i\geq 1$, by induction, let $L_i=N(L_0\cup\cdots\cup L_{i-1})$.
Let $t$ be maximum such that $L_t\neq\emptyset$.
It follows that $L_0,\ldots,L_t$ is a layering of $G$, and the set $L_0\cup\cdots\cup L_i$ is connected for every $i\in\set{0,\ldots,t}$.
In view of \cref{lem:layering}, we aim to bound $\chi(L_i)$ for all $i\in\set{0,\ldots,t}$.
Since $L_0=\set{u_0,v_0}$, we have $\chi(L_0)=2$.
Since $L_1\subseteq N(u_0)\cup N(v_0)$, we have $\chi(L_1)\leq\chi(N(u_0))+\chi(N(v_0))\leq 2f(\mu,\omega-1)$.
It remains to bound $\chi(L_i)$ when $i\in\set{2,\ldots,t}$.

Let $L'=L_0\cup\cdots\cup L_{i-2}$, and let $T'$ be the subtree of $T$ obtained as the union of all $T_v$ with $v\in L'$.
In other words, $T'$ is the subgraph of $T$ induced by all nodes whose bags intersect the set $L'$.
Since the set $L'$ is connected, $T'$ is indeed a subtree.
Let $A$ be the subset of $L_i$ comprising the vertices $v\in L_i$ such that $T_v$ and $T'$ share a node, and let $B=L_i\setminus A$.

\begin{claim}
$\chi(A)\leq f(\mu-1,\omega)$.
\end{claim}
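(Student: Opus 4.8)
The plan is to construct a tree decomposition $\cT_A=(T',\beta_A)$ of $G[A]$ with $\mu(\cT_A)\le\mu-1$; since in addition $\omega(G[A])\le\omega(G)\le\omega$ and $(\mu-1)+\omega<\mu+\omega$, the induction hypothesis then yields $\chi(A)=\chi(G[A])\le f(\mu-1,\omega)$ (trivially if $A$ is empty). Before that I would record two consequences of the facts $i\ge2$ and $L_i=N(L_0\cup\cdots\cup L_{i-1})$. First, no vertex of $L_i$ has a neighbour in $L'=L_0\cup\cdots\cup L_{i-2}$, because edges of a layering join only consecutive layers. Second, every vertex $w\in L'$ has a neighbour $w'$ that again lies in $L_0\cup\cdots\cup L_{i-2}$: if $w\in L_j$ with $1\le j\le i-2$, then membership $w\in N(L_0\cup\cdots\cup L_{j-1})$ together with $w\notin L_0\cup\cdots\cup L_{j-1}$ forces a neighbour of $w$ in $L_{j-1}$, and if $w\in L_0$ we take $w'$ to be the other endpoint of the edge $u_0v_0$. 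In particular both $w$ and $w'$ have all their neighbours in $L_0\cup\cdots\cup L_{i-1}$, so neither is adjacent to any vertex of $L_i$.

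Next I would set $\beta_A(x)=\beta(x)\cap A$ for every node $x$ of $T'$ and verify the three axioms of a tree decomposition for $\cT_A$. For $v\in A$, the nodes of $T'$ whose $\beta_A$-bag contains $v$ are exactly those of $V(T_v)\cap V(T')$, which is nonempty by the definition of $A$ and induces a subtree of $T'$; this handles the first and third axioms. For the second axiom, given an edge $uv$ of $G[A]$, the subtrees $T_u$, $T_v$, $T'$ of $T$ pairwise intersect ($T_u\cap T_v\ne\emptyset$ since $uv\in E(G)$, and $T_u\cap T'$ and $T_v\cap T'$ are nonempty since $u,v\in A$), so by the Helly property for subtrees of a tree they share a node $x$, and then $u,v\in\beta_A(x)$.

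Then I would bound $\mu(\cT_A)$. Fix a node $x$ of $T'$ and an induced matching $M$ in $G[A]$ that touches $\beta_A(x)$. Since $x\in V(T')$, the bag $\beta(x)$ meets $L'$; choose $w\in\beta(x)\cap L'$ and the neighbour $w'$ of $w$ from the first paragraph. Every endpoint of an edge of $M$ lies in $A\subseteq L_i$, while $w,w'\notin L_i$ and neither $w$ nor $w'$ is adjacent to any vertex of $L_i$; hence $M\cup\set{ww'}$ is an induced matching of $G$ of size $\size{M}+1$, and it touches $\beta(x)$ because $w\in\beta(x)$. As $\mu(\cT)\le\mu$, this forces $\size{M}+1\le\mu$. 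Therefore $\mu(\cT_A)\le\mu-1$, that is, $\yolov(G[A])\le\mu-1$, and the induction hypothesis completes the proof.

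The main obstacle is the extension step in the last paragraph: one must guarantee both that every bag indexed by a node of $T'$ contains a vertex of $L'$ and that such a vertex has a neighbour which can be appended to an arbitrary induced matching with all endpoints in $L_i$ without introducing an edge between the new pair and an old edge; picking the extra edge carelessly (for instance, an arbitrary edge at a vertex of $L_{i-2}$, which may land in $L_{i-1}$ and hence reach $L_i$) destroys the argument — this is exactly what the choice of the ``backward'' neighbour $w'$ is designed to avoid. The appeal to the Helly property for the edge axiom of $\cT_A$ is the only other slightly non-routine point.
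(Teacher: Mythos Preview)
Your proof is correct and follows essentially the same approach as the paper: you build the restricted tree decomposition $(T',\beta(\cdot)\cap A)$, invoke the Helly property for the edge axiom, and extend any induced matching touching a bag of $T'$ by one edge lying entirely in $L'$ to contradict $\mu(\cT)\le\mu$. The only cosmetic differences are that the paper obtains the neighbour $w'\in L'$ by citing that $L'$ is connected of size at least two (rather than by your explicit layer argument) and phrases the final step as a contradiction from $\size{M}=\mu$ rather than a direct bound $\size{M}+1\le\mu$.
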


\begin{claimproof}
It suffices to argue that $\yolov(A)\leq\mu-1$, because then the induction hypothesis entails $\chi(A)\leq f(\mu-1,\omega)$.
To this end, consider the tree decomposition $\cT'=(T',\beta')$ where $\beta'(x)=\beta(x)\cap A$ for all nodes $x$ of $T'$.
This is a tree decomposition of $G[A]$ thanks to the Helly property of subtrees of a fixed tree: if $uv$ is an edge of $G[A]$, then the trees $T_u$, $T_v$, and $T'$ pairwise intersect, and hence they have a common node.
Suppose, towards a contradiction, that there is a node $x$ of $T'$ such that the bag $\beta'(x)$ is touched by some induced matching $M$ of size $\mu$.
The node $x$ belongs to $T'$, so it also belongs to $T_v$ for some $v\in L'$, which is equivalent to saying that $v\in\beta(x)$.
Since the set $L'$ is connected and has size at least $2$, the vertex $v$ has a neighbor $u$ in $L'$.
Since there is no edge between $L'$ and $L_i$, the set $M\cup\set{uv}$ is an induced matching of size $\mu+1$ in $G$ that touches $\beta(x)$, contrary to the assumption that $\mu(\cT)\leq\mu$.
We conclude that $\yolov(A)\leq\mu(\cT')\leq\mu-1$, as desired.
\end{claimproof}

We also need to bound the chromatic number of $B$.
To this end, consider a connected component $C$ of $G[B]$ with maximum chromatic number, so that $\chi(B)=\chi(C)$.
Let $D$ be an inclusion-minimal subset of $L_{i-1}$ such that $C\subseteq N(D)$; it exists, as $C\subseteq N(L_{i-1})$.

\begin{claim}\label{clm:dominating}
$\size{D}<\R(\omega+1,\omega+1,\mu+1)$.
\end{claim}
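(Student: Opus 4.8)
The plan is to bound the size of $D$ via a Ramsey argument applied to the vertices of $D$, exploiting the minimality of $D$ together with the structure of the layering and the tree decomposition. By the minimality of $D\subseteq L_{i-1}$, for every $s\in D$ there is a \emph{private} neighbor $w_s\in C$ with $N(w_s)\cap D=\{s\}$. I want to find, among the vertices of $D$, a large subset that behaves uniformly; since $D$ has size $\R(\omega+1,\omega+1,\mu+1)$ would contradict this, I should build an auxiliary complete graph on (an enumeration of) $D$ and $3$-color its edges so that each of the three homogeneous outcomes yields a contradiction: two that produce a clique of size $\omega+1$ in $G$ (contradicting $\omega(G)\le\omega$) and one that produces an induced matching of size $\mu+1$ touching some bag (contradicting $\mu(\cT)\le\mu$).

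Concretely, fix an enumeration $s_1,\dots,s_N$ of $D$ with $N=\R(\omega+1,\omega+1,\mu+1)$, and for each $k$ let $w_k:=w_{s_k}\in C$ be its private neighbor. For $k<\ell$ I would color the edge $k\ell$ of the auxiliary $K_N$ by: color $1$ if $s_ks_\ell\in E(G)$; color $2$ if $s_ks_\ell\notin E(G)$ but $w_kw_\ell\in E(G)$; color $3$ if neither $s_ks_\ell\in E(G)$ nor $w_kw_\ell\in E(G)$. A color-$1$ clique of size $\omega+1$ gives a clique $\{s_{k_1},\dots,s_{k_{\omega+1}}\}$ in $G$. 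For color $2$, a clique of size $\omega+1$ gives $\{w_{k_1},\dots,w_{k_{\omega+1}}\}$ pairwise adjacent — here I need that the $w_k$'s chosen are \emph{distinct}, which holds because $w_k$ has $s_k$ as a neighbor in $D$ while $w_\ell$ does not, for $k\ne\ell$. For color $3$, a clique of size $\mu+1$ on indices $k_1<\dots<k_{\mu+1}$ should yield the induced matching $\{s_{k_1}w_{k_1},\dots,s_{k_{\mu+1}}w_{k_{\mu+1}}\}$: each $s_{k_j}w_{k_j}$ is an edge; the $s$'s are pairwise nonadjacent (color $3$) and the $w$'s are pairwise nonadjacent (color $3$); and $s_{k_j}w_{k_{j'}}$ is a non-edge for $j\ne j'$ because $w_{k_{j'}}$'s only neighbor in $D$ is $s_{k_{j'}}$. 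Finally this induced matching touches a bag: since $D\subseteq L_{i-1}\subseteq N(L')$ and $L'$ is connected, each $s_{k_j}$ lies in some bag, and in fact all of $s_{k_1},\dots,s_{k_{\mu+1}}$ together with their private neighbors — but I actually only need one bag hit by all matching edges, so I should instead argue that since $C$ is connected and $C\subseteq N(D)$, and using that the part of $T$ decomposing $C$ is a subtree while each $T_{s_k}$ meets it, there is a common node; more carefully, I would pick a bag containing the whole matching's $s$-side by a Helly-type argument on the subtrees $T_{s_{k_j}}$ and the subtree decomposing $C$, all of which pairwise intersect (each $s_{k_j}$ has a neighbor in $C$).

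The main obstacle I expect is precisely this last point: verifying that the extracted induced matching of size $\mu+1$ genuinely \emph{touches a single bag} of $\cT$. Unlike in Claim~\ref{clm:heavy}, the vertices $s_k$ need not all lie in one prescribed bag here. The clean way around it is to use the Helly property for subtrees of $T$: the subtrees $T_{s_{k_1}},\dots,T_{s_{k_{\mu+1}}}$ pairwise intersect because each $s_{k_j}$ and $s_{k_{j'}}$ both have neighbors in the connected set $C$, so $T_{s_{k_j}}$ and $T_{s_{k_{j'}}}$ both meet the subtree $T_C:=\bigcup_{w\in C}T_w$, forcing pairwise intersections among $\{T_{s_{k_j}}\}\cup\{T_C\}$; hence they share a common node $x$, and $s_{k_1},\dots,s_{k_{\mu+1}}\in\beta(x)$, so the matching touches $\beta(x)$. (Here connectedness of $C$ is exactly what makes $T_C$ a subtree, and that is why we pass to a connected component $C$ of $G[B]$ rather than working with all of $B$.) Once this is in place, in all three Ramsey outcomes we reach a contradiction, so $\size{D}<\R(\omega+1,\omega+1,\mu+1)$ as claimed.
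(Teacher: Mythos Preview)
Your approach is the same as the paper's—private neighbours from minimality of $D$, a three-colour Ramsey argument on $D$, and the induced matching in the colour-$3$ case—but your final step showing the matching touches a single bag has a gap. You assert that the subtrees $T_{s_{k_1}},\dots,T_{s_{k_{\mu+1}}}$ pairwise intersect because each meets $T_C$; this is false in general: two subtrees that both meet a third subtree of $T$ need not meet each other (picture $T_C$ a long path with $T_{s_1}$ hanging off one end and $T_{s_2}$ off the other). So Helly applied to $\{T_{s_{k_j}}\}\cup\{T_C\}$ does not go through as written.

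The missing ingredient is $T'$, which you mention earlier but then drop. Since each $s\in D\subseteq L_{i-1}$ has a neighbour in $L'=L_0\cup\cdots\cup L_{i-2}$, every $T_s$ meets $T'$; and since $C\subseteq B$, the subtree $T_C$ is disjoint from $T'$. Thus each $T_s$ meets both of the disjoint subtrees $T_C$ and $T'$, so each $T_s$ must contain the unique node $x$ of $T'$ closest to $T_C$. This gives $D\subseteq\beta(x)$ outright (not just the $\mu+1$ selected vertices), which is exactly what the paper proves before running the Ramsey argument. With this fix your argument is complete and coincides with the paper's.
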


\begin{claimproof}
Suppose, to the contrary, that $\size{D}\geq\R(\omega+1,\omega+1,\mu+1)$.
By the minimality of $D$, for every $d\in D$, there is a vertex $v_d\in C$ such that $N(v_d)\cap D=\set{d}$.

We claim that there is a node $x$ of $T$ such that $D\subseteq\beta(x)$.
To see this, let $T_C$ be the subgraph of $T$ induced by the union of the node sets of all trees $T_v$ with $v\in C$.
Since $C$ is connected, $T_C$ is a tree.
Moreover, the trees $T_C$ and $T'$ are node-disjoint.
Let $x$ be the node of $T'$ closest to $T_C$.
For every vertex $d\in D$, since $d$ has a neighbor in $C$ (namely, $v_d$) and a neighbor in $L_{i-2}$, the tree $T_d$ intersects both the tree $T_C$ and the tree $T'$; in particular, it contains the node $x$, which is equivalent to saying that $d\in\beta(x)$.

Now, consider an auxiliary complete graph $K$ with vertex set $D$.
Color the edges of this graph with three colors assigning the following color to the edge $dd'$ for all distinct $d,d'\in D$:
\begin{itemize}
\item $1$ if $dd'\in E(G)$,
\item $2$ if $dd'\notin E(G)$ and $v_dv_{d'}\in E(G)$,
\item $3$ if $dd'\notin E(G)$ and $v_dv_{d'}\notin E(G)$.
\end{itemize}
Since $\size{D}\geq\R(\omega+1,\omega+1,\mu+1)$, at least one of the following cases occurs:
\begin{enumerate}
\item There is a clique of size $\omega+1$ in $K$ with all edges of color $1$.
Such a clique corresponds to a~clique in $D$, contrary to the assumption that $\omega(G)\leq\omega$.
\item There is a clique of size $\omega+1$ in $K$ with all edges of color $2$.
Such a clique corresponds to a~clique in $\set{v_d\mid d\in D}$, again contradicting the assumption that $\omega(G)\leq\omega$.
\item There is a clique $D'$ of size $\mu+1$ in $K$ with all edges of color $3$.
Let $M=\set{dv_d\mid d\in D'}$.
Since $N(v_d)\cap D=\set{d}$ for every $d\in D$, it follows that $M$ is an induced matching of size $\mu+1$ in $G$.
Since $D\subseteq\beta(x)$, the matching $M$ touches the bag $\beta(x)$, which contradicts the assumption that $\mu(\cT)\leq\mu$.\qedhere
\end{enumerate}
\end{claimproof}

We have $C\subseteq N(D)\subseteq\bigcup_{d\in D}N(d)$, which implies that
\[\chi(B)=\chi(C)\leq\sum_{d\in D}\chi(N(d))\leq\size{D}\cdot f(\mu,\omega-1)\leq(\R(\omega+1,\omega+1,\mu+1)-1)\cdot f(\mu,\omega-1)\text{,}\]
where in the last inequality, we invoke \cref{clm:dominating}.

To sum up, we have shown that
\begin{gather*}
\chi(L_0)=2\text{,} \qquad\qquad \chi(L_1)\leq 2f(\mu,\omega-1)\text{,} \\
\chi(L_i)\leq\chi(A)+\chi(B)\leq f(\mu-1,\omega)+(\R(\omega+1,\omega+1,\mu+1)-1)\cdot f(\mu,\omega-1) \quad \text{for $i\geq 2$.}
\end{gather*}
Since $\mu\geq 1$ and $\omega\geq 2$, we have $\R(\omega+1,\omega+1,\mu+1)\geq 6$.
We can apply \cref{lem:layering} and conclude that $\chi(G)\leq f(\mu,\omega)$, setting $f(\mu,\omega)$ to be twice the right side of the last inequality above.
\end{proof}

\section{Open problems}\label{sec:open}
A possible way of generalizing our results is to consider some parameter $\param$ that is more general than induced matching treewidth, i.e., for every graph it holds that $\param(G)$ is bounded by some function of $\yolov(G)$ but not the other way around.

A possible choice of such a parameter is \emph{sim-width}, which is also defined in terms of induced matchings, but this time in \emph{branch decompositions} (see~\cite{KKST17}).
It is known that the sim-width is always bounded from above by $\yolov$ (see~\cite[Theorem~24]{BKR23}).
Relationships between various parameters, including sim-width, in restricted graph classes were recently studied by Brettell, Munaro, Paulusma, and Yang (see~\cite{BMPY23}).
However, they were not able to resolve the following problem, which we repeat here.

\begin{problem}[{\cite[Open Problem~1]{BMPY23}}]\label{prob:simw-biclique}
Do graph classes of bounded sim-width that exclude some fixed biclique as an induced subgraph have bounded tree-independence number?
\end{problem}

By \cref{thm:biclique}, \cref{prob:simw-biclique} is equivalent to the following.

\begin{problem}\label{prob:simw-biclique-2}
Do graph classes of bounded sim-width that exclude some fixed biclique as an induced subgraph have bounded induced matching treewidth?
\end{problem}

Actually, we believe that a stronger variant of \cref{prob:simw-biclique-2} might also have a positive answer.
For a positive integer $t$, let a \emph{$t$-obstruction} be any graph whose vertex set consists of four independent sets $A$, $B$, $C$, and $D$, each of size $t$, such that each of sets $A\cup B$ and $C\cup D$ induces a matching of $t$ edges and the set $B\cup C$ induces a copy of $K_{t,t}$; the other edges are arbitrary (see \Cref{fig:t-obstruction}).
We remark that any $t$-obstruction has induced matching treewidth at least $t$.
Hence, the presence of a $t$-obstruction, for large $t$, forces large induced matching treewidth, just like the presence of a large induced biclique forces large tree-independence number.

\begin{figure}[h]
    \centering
    \includegraphics[width=0.4\textwidth]{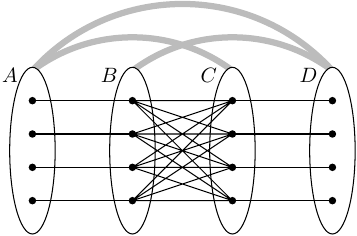}
    \caption{A $t$-obstruction for $t=4$.
    Edges between pairs of independent sets depicted by the three shaded arcs are arbitrary.}
    \label{fig:t-obstruction}
\end{figure}

\begin{problem}[stronger variant of \cref{prob:simw-biclique-2}]\label{prob:simw-corona}
Does there exist a function\/ $\mu\colon\bbN\times\bbN\to\bbN$, such that every graph\/ $G$ of sim-width at most\/ $s$ that does not contain a\/ $t$-obstruction as an induced subgraph satisfies\/ $\yolov(G)\leq\mu(s,t)$?
\end{problem}

A positive answer to \cref{prob:simw-corona} would clearly imply a positive answer to \cref{prob:simw-biclique-2}, as any $t$-obstruction contains $K_{t,t}$ as an induced subgraph.

\Cref{prob:simw-biclique,prob:simw-corona} can be seen as a natural strengthening of our \cref{thm:biclique}.
We next discuss two possible natural generalizations of our \cref{thm:chibdd}.

We say that a class of graphs is \emph{polynomially\/ $\chi$-bounded} if it is $\chi$-bounded by a polynomial function.
While Briański, Davies, and Walczak~\cite{BDW24} recently showed that there exist $\chi$-bounded hereditary graph classes that are not polynomially $\chi$-bounded, the argument based on Ramsey's theorem showing that classes of bounded tree-independence number are $\chi$-bounded actually shows polynomial $\chi$-boundedness (see~\cite{DMS24a}).
On the other hand, our proof of \cref{thm:chibdd} results in non-polynomial $\chi$-bounding functions, leaving open the following.

\begin{problem}
Are graph classes of bounded induced matching treewidth polynomially\/ $\chi$-bounded?
\end{problem}

Another strengthening of \cref{thm:chibdd} would be given by a positive answer to the following question.

\begin{problem}
Are graph classes of bounded sim-width\/ $\chi$-bounded?
\end{problem}

We conclude with pointing out a certain drawback of sim-width: it is not clear how useful it is to obtain polynomial-time algorithms.
Some results in this direction were obtained by Bergougnoux, Korhonen, and Razgon~\cite{BKR23}.

\section*{Acknowledgements}

The main part of this work was done at the Structural Graph Theory workshop STWOR in September 2023.
This workshop was a part of STRUG: Structural Graph Theory Bootcamp, funded by the ``Excellence initiative -- research university (2020--2026)'' of University of Warsaw.
We thank the participants for a productive and inspiring atmosphere.

\bibliographystyle{plain}
\begin{sloppypar}
\bibliography{biblio}
\end{sloppypar}

\end{document}